\newtheorem{theorem}{Theorem}[section]
\newtheorem{definition}[theorem]{Definition}
\newtheorem{lemma}[theorem]{Lemma}
\newtheorem{problem}[theorem]{Problem}
\newtheorem{proposition}[theorem]{Proposition}
\newtheorem{remark}[theorem]{Remark}
\renewcommand{\bar}{\overline}
\renewcommand{\phi}{\varphi}
\newcommand{\bfC}{{\mathbf C}}
\newcommand{\bfR}{{\mathbf R}}
\newcommand{\calM}{{\mathcal M}}
\newcommand{\barR}{\overline {R}}
\newcommand{\barGamma}{\overline{\Gamma}}
\newcommand{\mapright}[1]{\smash{\mathop{   \hbox to 0.7cm{\rightarrowfill}}
  \limits^{#1}}}
\newcommand{\Ric}{\operatorname{Ric}}
\newcommand{\Tr}{\operatorname{Tr}}
\newcommand{\ad}{\operatorname{ad}}
\newcommand{\tr}{\operatorname{tr}}
\title{Quantum moment map and obstructions to the existence of closed Fedosov star products}
\author{
Akito Futaki and Laurent La Fuente-Gravy
}
\address{Yau Mathematical Sciences Center, Tsinghua University, Haidian district, Beijing 100084, China}
\email{futaki@tsinghua.edu.cn}
 \address{Mathematics Research Unit, Universit\'e du Luxembourg, 
 MNO, 6, Avenue de la Fonte, L-4364 Esch-sur-Alzette, Luxembourg}
\email{laurent.lafuente@uni.lu}
\begin{document}

\begin{abstract}
It is shown that the normalized trace of Fedosov star product for quantum moment map
depends only on the path component in the cohomology class of the symplectic form and the cohomology class of the closed formal 2-form
required to define Fedosov connections (Theorem \ref{main thm}).
As an application we obtain a family of obstructions to the existence of closed Fedosov star products 
naturally attached to symplectic manifolds (Theorem \ref{main thm1}) and K\"ahler manifolds (Theorem \ref{main thm2}).
These obstructions are integral invariants depending only on the path component of the cohomology class of the symplectic form.
Restricted to compact K\"ahler manifolds we re-discover an obstruction found earlier in \cite{LLF2}.
\end{abstract}

\maketitle

\section{Introduction}
A \emph{star product} \cite{BFFLS} on a Poisson manifold $M$ of dimension $n=2m$ is an associative product $*$ on the space $C^{\infty}(M)[[\nu]]$ of formal power series in $\nu$ with coefficients in $C^{\infty}(M)$ such that if we write
$$f*g:=\sum_{r=0}^{\infty} \nu^r C_r(f,g)\ \textrm{ for } f,\, g\in C^{\infty}(M)$$
then 
\begin{enumerate}
\item the $C_r$'s are bidifferential $\nu$-linear operators,
\item $C_0(f,g)=fg$ and $C_1(f,g)-C_1(g,f)=\{f,g\}$,
\item the constant function $1$ is a unit for $*$ (i.e. $f*1=f=1*f$).
\end{enumerate}
Recall that a symplectic form $\omega$ is a closed nondegenerate $2$-form. It induces the Poisson bracket $\{f,g\}:=-\omega(X_f,X_g)$ for $f,\, g\in C^{\infty}(M)$ and vector field $X_f$ uniquely determined by $\imath(X_f)\omega=df$. 
Any star product $\ast$ on a symplectic manifold $(M,\omega)$ admits a unique normalized trace
$$ \Tr : C^\infty(M)[[\nu]] \to \bfR[\nu^{-1},\nu]]$$
satisfying
$$ \Tr([f,g]_\ast) = 0.$$
Here normalization means as follows. On a contractible Darboux chart $U$ we have an equivalence
$B : (C^\infty(U)[[\nu]],\ast) \to (C^\infty(U)[[\nu]],\ast_{\mathrm{Moyal}})$
of $\ast|_{C^\infty(U)[[\nu]]}$ with the Moyal star product $\ast_{\mathrm{Moyal}}$ satisfying
$$ Bf\ast_{\mathrm{Moyal}} Bg = B(f\ast g).$$
The normalization condition is 
\begin{equation}\label{normalized_tr}
\Tr(f) = \frac1{(2\pi\nu)^m} \int_M Bf\ \frac{\omega^m}{m!}.
\end{equation}
It is known that the trace of a star product can always be written as an $L^2$-pairing with an essentially unique formal function $\rho\in C^{\infty}(M)[\nu^{-1},\nu]]$, called the trace density.
A star product is said to be (strongly) closed if 
the integration functional is a trace, c.f. \cite{CFS}. 
Equivalently, it means that the trace density is a formal constant, i.e. $\rho \in \bfR[\nu^{-1},\nu]]$. If such a closed star product exists, it is possible to define its character \cite{CFS}, a cyclic cocycle in cyclic cohomology. 
See \cite{fed}, \cite{NT}, \cite{gr} for more on the trace and the trace density.

There are known constructions of star products \cite{DWL}, \cite{fed2}, \cite{OMY}, \cite{Kon}. 
In this paper we consider Fedosov star product constructed in \cite{fed2} on symplectic manifolds. 
The Fedosov star product is defined 
given a symplectic connection $\nabla$ and a 
closed formal $2$-form $\Omega \in \nu \Omega^2(M)[[\nu]]$, and thus we denote it by $\ast_{\nabla, \Omega}$. 
Here, a symplectic connection means 
a torsion free connection making $\omega$ parallel. It is known (\cite{NT}, \cite{del}, \cite{bertcagutt}) that any star product on a symplectic manifold
is equivalent to a Fedosov star product.

In this paper, we study closedness of Fedosov star products naturally attached to symplectic or K\"ahler manifolds. On a compact symplectic manifold, we fix the de Rham class $[\omega_0]$ of the symplectic form. We study the following problem:

\begin{problem}[Symplectic version] \label{problem:symplectic}
Can one find a pair $(\omega,\nabla)$ consisting of a symplectic form $\omega \in [\omega_0]$ and a symplectic connection $\nabla$ with respect to $\omega$ such that $\ast_{\nabla,0}$ is closed?
\end{problem}

\noindent This problem is motivated by the study of moment map geometry of the space of symplectic connections. As noticed in \cite{LLF}, since the trace density of $*_{\nabla,0}$ is given by
\begin{equation*}\label{density1}
(2\pi\nu)^m\rho^{\nabla,0}:=1-\frac{\nu^2}{24} \mu(\nabla) + O(\nu^3)
\end{equation*}
where $\mu(\nabla)$ is the Cahen-Gutt momentum \cite{cagutt} of the symplectic connection $\nabla$, an affirmative answer to the above problem for $*_{\nabla,0}$ implies the constancy of the Cahen-Gutt momentum $\mu(\nabla)$. Recall that $\mu(\nabla)$ is given by
\begin{equation*}\label{density2}
\mu(\nabla):=(\nabla^2_{(p,q)} \Ric^{\nabla})^{pq} - \frac{1}{2} \Ric^{\nabla}_{pq}\Ric^{\nabla\;pq}+\frac{1}{4}\mathrm{R}^{\nabla}_{pqrs}\mathrm{R}^{\nabla \,pqrs},
\end{equation*}
where $\mathrm{R}^{\nabla} $ is the curvature of $\nabla$ and $\Ric^{\nabla}(\cdot,\cdot):=\tr[V\mapsto \mathrm{R}^{\nabla}(V,\cdot)\cdot]$
is the Ricci tensor.

On a closed K\"ahler manifold $(M,\omega_0,J)$, one consider the space $\calM_{[\omega_0]}$ of K\"ahler forms in the cohomology class of $\omega_0$, the complex structure being fixed. To $\omega \in \calM_{[\omega_0]}$, one can attach a natural family of Fedosov star products $\ast_{\nabla,\Omega_k(\omega)}$ described as follows. For $k\in \bfR$, consider closed $2$-form
$$\Omega_k(\omega):=\nu\,k\Ric(\omega),$$
with $\Ric(\omega):=\Ric^{\nabla}(J\cdot,\cdot)$ being the Ricci form of the K\"ahler manifold $(M,\omega,J)$.

\begin{problem}[K\"ahler version] \label{problem:kahler}
For a fixed real number $k$, can one find $\omega\in \calM_{[\omega_0]}$ with Levi-Civita connection $\nabla$ and Ricci form $\Ric(\omega)$ such that $\ast_{\nabla,\Omega_k(\omega)}$ is closed?
\end{problem}

\noindent A trace density for $\ast_{\nabla,\Omega_k(\omega)}$ is given by
$$(2\pi\nu)^m\rho^{\nabla,\Omega_k(\omega)}=1-\frac{\nu\,k}{2} \,\mathrm{S}_{\omega} + O(\nu^2),$$
with $\mathrm{S}_{\omega}$ being the scalar curvature (see Remark \ref{remark:rho1kahler}). So a necessary condition for $\ast_{\nabla,\Omega_k(\omega)}$, with $k\neq 0$, to be closed is the existence of a constant scalar curvature K\"ahler metric. 

Our obstructions come from the presence of symmetries of the symplectic manifolds. When a compact Lie group $G$ acts on $(M,\omega)$, it is natural to restrict the above problem on $G$-invariant symplectic forms in $[\omega_0]$ and to consider $G$-invariant Fedosov star products (built with $G$-invariant $\nabla$ and $\Omega$). An important feature in this context is the notion of quantum moment map \cite{MullerNeumaier,Xu,gr3,MullerNeumaierWick} which leads to phase space reduction in deformation quantization \cite{BHW,fedreduction}.
Let $G$ be a compact Lie group acting effectively on a compact symplectic manifold $M$ preserving
the symplectic form $\omega$, a closed formal $2$-form 
$\Omega \in \nu \Omega^2(M)[[\nu]]$ and a symplectic connection $\nabla$ so that the Fedosov star product $*_{\nabla,\Omega}$ is G-invariant. 
We identify a Lie algebra element $X \in \mathfrak g$ with a vector field on $M$ by the action of $G$. In \cite{MullerNeumaier,Xu,gr3}, 
a map $\mu_{\cdot} : \mathfrak g \to C^{\infty}(M)[[\nu]]$ is called a quantum moment map
if $\mu_{\cdot}$ is a Lie algebra morphism with respect to the commutator $\frac{1}{\nu}[\cdot,\cdot]_{*_{\nabla,\Omega}}$ on $C^{\infty}(M)[[\nu]]$ satisfying
\begin{equation}\label{qmoment}
X(u) = \frac1\nu \ad_{\ast_{\nabla,\Omega}} \mu_X (u)
\end{equation}
for any $u\in C^{\infty}(M)[[\nu]]$.
It follows from  Theorem 8.2 in \cite{gr3} or Deduction 4.4 in \cite{MullerNeumaier} that \eqref{qmoment} is equivalent to 
\begin{equation}\label{qmoment3}
 d \mu_X = i(X)(\omega - \Omega).
\end{equation}

If a formal function $f \in C^\infty(M)[[\nu]]$ satisfies 
$i(X_f) (\omega - \Omega) = df$
for some vector field $X_f$ we call $X_f$ the quantum Hamiltonian vector field of $f$, and also call $f$ the quantum
Hamiltonian function of $X_f$. In this paper we adopt \eqref{qmoment3} as the definition of
quantum moment map without mentioning $\nabla$, 
and we say that, given a symplectic form $\omega$ and a closed 2-form $\Omega$, 
a $G$-equivariant map $\mu : M \to \mathfrak g^\ast[[\nu]]$
is a quantum moment map if $\mu_X := \langle \mu, X \rangle \in C^\infty(M)[[\nu]]$ is a quantum Hamiltonian function of 
$X \in \mathfrak g$.
If there is a quantum moment map 
we say that $G$-action on $(M, \omega, \Omega)$ is quantum-Hamiltonian. 
Naturally if $(M, \omega, \Omega)$ is quantum-Hamiltonian $G$-space then $\omega$ and $\Omega$ are $G$-invariant if $G$ is connected.
Given a $G$-invariant symplectic connection $\nabla$ and Fedosov star product $\ast_{\nabla,\Omega}$, the quantum moment map
$\mu : M \to \mathfrak g^\ast[[\nu]]$ in our sense induces $\mu_{\cdot} : \mathfrak g \to C^{\infty}(M)[[\nu]]$ satisfying \eqref{qmoment}
by Theorem 8.2 in \cite{gr3} or Deduction 4.4 in \cite{MullerNeumaier} as quoted above. 
Further, by the $G$-equivariance we required for $\mu : M \to \mathfrak g^\ast[[\nu]]$,
the induced map $\mu_{\cdot} : \mathfrak g \to C^{\infty}(M)[[\nu]]$ is a Lie algebra morphism and thus a quantum moment map in the sense of 
\cite{MullerNeumaier,Xu,gr3}. 

Quantum moment maps are not unique, and any two of them differ by a map $b:\mathfrak g \to \mathbb{R}[[\nu]]$ that vanishes on Lie bracket. As a consequence, we can assume the quantum moment map is normalized so that 
\begin{equation}\label{normalize_c}
 \int_M \mu_X (\omega - \Omega)^m = 0.
\end{equation}

Given a quantum-Hamiltonian $G$-space $(M, \omega_0, \Omega_0)$, we denote by $\mathcal C^G([\omega_0],[\Omega_0])$ 
the space consisting of all triples $(\omega, \Omega, \nabla)$
such that 
\begin{enumerate}
\item[(a)] $(M,\omega,\Omega)$ is a quantum-Hamiltonian $G$-space, 
\item[(b)] $\omega$ is cohomologous to $\omega_0$ and there is a smooth path $\{\omega_s\}_{0\le s \le 1}$ consisting of $G$-invariant
symplectic forms joining $\omega_0$ and $\omega$ in the cohomology class $[\omega_0]$, 
\item[(c)] $\Omega$ is cohomologous to $\Omega_0$, and
\item[(d)] $\nabla$ is a $G$-invariant symplectic connection with respect to $\omega$.
\end{enumerate}
For each triple $(\omega, \Omega, \nabla)$ in $\mathcal C^G([\omega_0],[\Omega_0])$ we have the Fedosov star product $\ast_{\nabla,\Omega}$.

\begin{theorem}\label{main thm}
Let $(M, \omega_0, \Omega_0)$ be a quantum-Hamiltonian $G$-space and consider a triple 
$(\omega, \Omega, \nabla)$ in 
$\mathcal C^G([\omega_0],[\Omega_0])$. For $X \in \mathfrak g$, let $\mu_X$ be the quantum Hamiltonian
function of $X$ with respect to $\omega - \Omega$ with normalization \eqref{normalize_c}. 
Then the trace $\Tr^{\ast_{\nabla,\Omega}}(\mu_X)$
of the Fedosov star product $\ast_{\nabla,\Omega}$ is independent of the choice of $(\omega, \Omega, \nabla)$ in 
$\mathcal C^G([\omega_0],[\Omega_0])$.
\end{theorem}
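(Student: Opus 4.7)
The plan is to show that the quantity $F(\omega,\Omega,\nabla):=\Tr^{\ast_{\nabla,\Omega}}(\mu_X)$ is constant along any smooth $G$-equivariant path of triples in $\mathcal{C}^G([\omega_0],[\Omega_0])$. It suffices to treat three elementary types of paths separately: (a) varying $\omega$ within its path component in $[\omega_0]$ while keeping $(\Omega,\nabla)$ essentially fixed, (b) varying $\Omega$ within the cohomology class $[\Omega_0]$ with $(\omega,\nabla)$ fixed, and (c) varying $\nabla$ in the affine space of $G$-invariant symplectic connections with $(\omega,\Omega)$ fixed. Any two elements of $\mathcal{C}^G([\omega_0],[\Omega_0])$ can be joined by a concatenation of such paths, so the theorem will follow.

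For type (a), I would invoke an equivariant Moser argument to produce a smooth family of $G$-equivariant diffeomorphisms $\phi_t\colon M\to M$ with $\phi_0=\mathrm{id}$ and $\phi_t^\ast\omega_t=\omega_0$. Naturality of the Fedosov construction under equivariant diffeomorphisms, together with diffeomorphism-invariance of the normalized trace, reduces this case to constant $\omega$. For type (c), I would use the fact that for fixed $(\omega,\Omega)$ any two $G$-invariant Fedosov star products are $G$-equivariantly equivalent via a formal differential operator $T$ that preserves the normalized trace, i.e. $\Tr^{\ast_1}\circ T=\Tr^{\ast_0}$. Because the defining equation $d\mu_X=i(X)(\omega-\Omega)$ does not depend on $\nabla$, the difference $T\mu_X^{(1)}-\mu_X^{(0)}$ is a formal constant, pinned to zero by the normalization \eqref{normalize_c}; the two traces then coincide.

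The real content lies in type (b). With $\omega$ and $\nabla$ fixed, write $\dot\Omega_t=d\alpha_t$ for a smooth $G$-invariant family $\alpha_t\in\nu\Omega^1(M)[[\nu]]$. The key technical input, already used in \cite{LLF2}, is Neumaier's variational formula: the infinitesimal variation $\frac{d}{dt}\ast_{\nabla,\Omega_t}$ is realized as a quasi-inner derivation $\frac{1}{\nu}\ad_{\ast_t}(h_t)$ for a formal function $h_t\in C^\infty(M)[[\nu]]$ canonically built from $\alpha_t$ and the Fedosov data. Differentiating $d\mu_X^{(t)}=i(X)(\omega-\Omega_t)$ and using $G$-invariance of $\alpha_t$ gives $\dot\mu_X^{(t)}=i(X)\alpha_t+c_t$ for a formal constant $c_t$. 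A direct computation, invoking repeatedly the trace property $\Tr^{\ast_t}([\cdot,\cdot]_{\ast_t})=0$ and matching the inner element $h_t$ with $i(X)\alpha_t$ modulo commutators, collapses $\frac{d}{dt}\Tr^{\ast_t}(\mu_X^{(t)})$ to a multiple of $c_t$. Differentiating the normalization \eqref{normalize_c} in $t$ then forces $c_t=0$, and the variation vanishes.

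The hard part is the careful bookkeeping in type (b): one must identify the correct inner element $h_t$ implementing Neumaier's formula, match it with the explicit variation of $\mu_X^{(t)}$ so that the non-commutator pieces are precisely the constant killed by the normalization, and control the formal $\nu$-orders throughout. Conceptually, the argument reflects that $\omega-\Omega+\mu_\cdot$ defines an equivariantly closed formal $2$-form and that $\Tr^{\ast_{\nabla,\Omega}}(\mu_X)$ is an equivariant characteristic number depending only on its equivariant cohomology class, a point of view that should also make the independence from the path component transparent.
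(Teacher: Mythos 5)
Your skeleton (equivariant Moser for the variation of $\omega$, plus a variational argument in the $\Omega$- and $\nabla$-directions) is the same as the paper's, and your part (a) is essentially its Step 2. The gap is that parts (b) and (c), where all the content lies, are not actually carried out, and the one concrete claim you make in (c) is unjustified. Grant that a $G$-equivariant equivalence $T$ from $\ast_{\nabla^0,\Omega}$ to $\ast_{\nabla^1,\Omega}$ satisfies $\Tr^{\ast_{\nabla^1,\Omega}}\circ T=\Tr^{\ast_{\nabla^0,\Omega}}$ and that $T\mu_X-\mu_X$ is a formal constant $c$ (both are true). The normalization \eqref{normalize_c} pins down $\mu_X$ among the quantum Hamiltonian functions of $X$; it says nothing about $T\mu_X$, and with $T=\mathrm{id}+\nu T_1+\cdots$ the constant $c$ has leading term $\nu$ times the mean value of $T_1\mu_X$, which has no reason to vanish. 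Since $\Tr^{\ast_{\nabla^1,\Omega}}(\mu_X)-\Tr^{\ast_{\nabla^0,\Omega}}(\mu_X)=-c\,\Tr^{\ast_{\nabla^1,\Omega}}(1)$, proving that $c$ vanishes (or does not contribute) is exactly the assertion of the theorem in the $\nabla$-direction, so your argument for (c) is circular at this point. Part (b) has the same defect: the sentence claiming that a direct computation ``collapses'' $\frac{d}{dt}\Tr^{\ast_t}(\mu_X^{(t)})$ to a multiple of $c_t$ is precisely the statement requiring proof. (Also, the variation of $\ast_{\nabla,\Omega_t}$ cannot literally be a quasi-inner \emph{derivation} $\frac1\nu\ad_{\ast_t}(h_t)$: the Hochschild coboundary of a derivation vanishes, so the product would not vary at all; the relevant object is the non-derivation $1$-cochain $\dot T_tT_t^{-1}$.)

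The missing mechanism, which the paper supplies, is the combination of an explicit Fedosov-style variation formula with the quantum moment map identity. Theorem \ref{variation1} gives, for fixed $F$, $\frac{d}{dt}\Tr^{\ast_{\nabla^t,\Omega_t}}(F)=\Tr^{\ast_{\nabla^t,\Omega_t}}\bigl(\frac1\nu[D_t^{-1}(\dot{\bar\Gamma}-\dot\beta),Q_t(F)]|_{y=0}\bigr)$; this does not vanish by the trace property alone because $D_t^{-1}(\dot{\bar\Gamma}-\dot\beta)$ is not a flat section. For $F=\mu_{X,t}$ one then uses Proposition \ref{Lie der}, namely $L_X=D\circ i(X)+i(X)\circ D+\frac1\nu\ad_\ast(Q(\mu_X))$, to rewrite the commutator as $(-L_X+D_t\circ i(X)+i(X)\circ D_t)$ applied to the $0$-form $D_t^{-1}(\dot{\bar\Gamma}-\dot\beta)$; at $y=0$ only $i(X)(\dot{\bar\Gamma}-\dot\beta)|_{y=0}=-\dot\beta(X)$ survives (since $\dot{\bar\Gamma}$ has $y$-degree $2$), and this exactly cancels $\frac{d}{dt}\mu_{X,t}=\dot\beta_t(X)$ coming from Proposition \ref{norma_var}. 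Your equivariant-cohomology heuristic at the end points in the right direction, but it is this identity---not the normalization---that kills the residual constants in (b) and (c).
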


\noindent Hence, one can define a symplectic invariant :

\begin{definition}\label{invariant2}
We define a character $\Tr^{[\omega_0],[\Omega_0]} : \mathfrak g \to \bfR[[\nu]]$ by 
$$ \Tr^{[\omega_0],[\Omega_0]} (X) := \Tr^{\ast_{\nabla,\Omega}}(\mu_X)$$
where the right hand side is given by Theorem \ref{main thm} with normalization as in \eqref{normalize_c}. 
\end{definition}

In the particular case $\Omega=0$, we obtain an obstruction to the existence of closed Fedosov star products, answering to Problem \ref{problem:symplectic}.

\begin{theorem}\label{main thm1}
Let $(M,\omega_0)$ be a compact symplectic manifold. If there exists a closed Fedosov star product
$\ast_{ \nabla,0}$ for $(\omega, 0, \nabla)$
in $\mathcal C^G([\omega_0],0)$ then $\Tr^{[\omega_0],0}$ vanishes.
\end{theorem}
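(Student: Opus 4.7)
The plan is to reduce the question to a direct computation of the trace using the defining properties of a closed star product together with the normalization \eqref{normalize_c} of the quantum moment map.

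First, I would fix $X \in \mathfrak g$ and apply Theorem \ref{main thm} to the class $\mathcal C^G([\omega_0],0)$. That theorem tells us that $\Tr^{[\omega_0],0}(X) = \Tr^{\ast_{\nabla,\Omega}}(\mu_X)$ is the same value for every triple $(\omega,\Omega,\nabla) \in \mathcal C^G([\omega_0],0)$, where $\mu_X$ is the quantum Hamiltonian normalized by \eqref{normalize_c}. In particular, if one can exhibit a single triple $(\omega,0,\nabla) \in \mathcal C^G([\omega_0],0)$ for which the Fedosov star product $\ast_{\nabla,0}$ is closed, then it suffices to evaluate the trace against $\mu_X$ on that one triple.

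Next, I would use the characterization of closedness recalled in the introduction: a star product is closed if and only if its trace density $\rho$ is a formal constant in $\bfR[\nu^{-1},\nu]]$. Since on any Darboux chart the leading term of $\rho$ is forced by the normalization \eqref{normalized_tr} to be $(2\pi\nu)^{-m}$, closedness of $\ast_{\nabla,0}$ means that there exists a constant $c(\nu) \in \bfR[\nu^{-1},\nu]]$ with
\[
\Tr^{\ast_{\nabla,0}}(f) \;=\; c(\nu)\int_M f\,\frac{\omega^m}{m!}, \qquad f \in C^\infty(M)[[\nu]].
\]
Evaluating this at $f = \mu_X$ and inserting the normalization condition \eqref{normalize_c} in the case $\Omega=0$, namely $\int_M \mu_X\,\omega^m = 0$, gives $\Tr^{\ast_{\nabla,0}}(\mu_X) = 0$. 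Combined with the first step, this yields $\Tr^{[\omega_0],0}(X) = 0$ for every $X \in \mathfrak g$, which is the desired conclusion.

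The conceptual heart of the argument is really Theorem \ref{main thm}, which has already been established; once one has the invariance of $\Tr^{\ast_{\nabla,\Omega}}(\mu_X)$ within $\mathcal C^G([\omega_0],[\Omega_0])$, the present theorem is essentially a matter of bookkeeping. The only small point deserving care is to make sure the normalization \eqref{normalize_c} used to \emph{define} $\Tr^{[\omega_0],0}$ matches the one that kills $\int_M \mu_X\,\omega^m$ when $\Omega = 0$; since this is tautologically the case, no further input is required.
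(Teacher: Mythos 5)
Your proposal is correct and follows essentially the same route as the paper: invoke Theorem \ref{main thm} to reduce to the single triple on which $\ast_{\nabla,0}$ is closed, then use that closedness makes the trace a formal constant multiple of $\int_M(\cdot)\,\omega^m/m!$, which kills $\mu_X$ by the normalization \eqref{normalize_c} with $\Omega=0$. This is exactly the bookkeeping the paper carries out (explicitly in the closing lines of the proof of Theorem \ref{main thm2}, and implicitly for Theorem \ref{main thm1}), and your care in allowing a general formal constant $c(\nu)$ rather than assuming the density is exactly $(2\pi\nu)^{-m}$ is appropriate.
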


Expanding $\Tr^{[\omega_0],0}(X)$ in terms of power series in $\nu$ we obtain a series of integral invariants
obstructing the existence of closed Fedosov star products.
The $\nu^{2-m}$-term is exactly the invariant found in \cite{LLF2}. See also \cite{FO_CahenGutt} for a different derivation
of this invariant using Donaldson-Fujiki type picture. This is one of the obstructions to asymptotic Chow semi-stability 
found by the first author in \cite{Fut04}. 
As discussed in \cite{FF}, the trace density is considered to play the same role as the Bergman function for the Berezin-Toeplitz star product
\cite{Schlich}, \cite{BMS}. 
See also \cite{donaldson01}, \cite{Fut05}, \cite{FO_SugakuExp}, \cite{FOS}, \cite{Fut12}, \cite{DVZ10}, \cite{Yamashita}, \cite{Ioos} for related topics.

On a compact K\"ahler manifold $(M,\omega_0,J)$ admitting an effective action of a compact Lie group $G$ preserving $\omega_0$ and $J$, it is natural to study $\calM^G_{[\omega_0]}$ the space of $G$-invariant K\"ahler forms in the cohomology class of $\omega_0$. For $\omega \in \calM^G_{[\omega_0]}$ and $k\in \bfR$, the  closed $2$-form $\Omega_k(\omega)$ is $G$-invariant. Thus, for $\omega \in \calM^G_{[\omega_0]}$ and $k\in \bfR$, we may consider the $G$-invariant Fedosov star product $*_{\nabla,\Omega_{k}(\omega)}$, where $\nabla$ is the Levi-Civita connection of the K\"ahler form $\omega$. 

Assume $\omega \in \calM^G_{[\omega_0]}$ makes $(M,\omega,0)$ a quantum-Hamiltonian $G$-space with quantum moment map $\mu_{\cdot}$ normalized by \eqref{normalize_c}. 
 Then, we will show that the triple $(\omega,\Omega_k(\omega),\nabla)$ is in $\mathcal C^G([\omega_0],[\Omega_k(\omega_0)])$ with 
some quantum moment map, which we denote by $\mu^k_{\cdot}$, normalized by \eqref{normalize_c}, i.e. in this case
\begin{equation*}
 \int_M \mu^k_X (\omega - \Omega_k(\omega))^m = 0
\end{equation*} 
for any $X \in \mathfrak g$. 
Another natural normalization for quantum moment maps is given by the integral. We define $\tilde{\mu}^k_{\cdot}$ to be the quantum moment map of the quantum-Hamiltonian $G$-space $(M,\omega,\Omega_k(\omega))$ normalized by 
\begin{equation}\label{eq:normalizedkahler}
\int_M\tilde{\mu}^k_{\cdot}\omega^m=0.
\end{equation}
In Proposition \ref{prop:quantumkahler}, we show $\tilde{\mu}^k_{\cdot}$ differs from $\mu^k_{\cdot}$ by a K\"ahler invariant, i.e. a constant depending only on the
K\"ahler class. Applying Theorem \ref{main thm}, we obtain a K\"ahler invariant obstructing the closedness of the Fedosov star product $*_{\nabla,\Omega_k(\omega)}$.

\begin{theorem}\label{main thm2}
Let $(M,\omega_0,J)$ be a compact K\"ahler manifold with $(\omega,0,\nabla) \in \mathcal C^G([\omega_0],0)$. Then for all $k\in \bfR$, 
$$\Tr^{\calM^G_{[\omega_0]},k}(X):=\Tr^{*_{\nabla,\Omega_k(\omega)}}(\tilde{\mu}^k_X)$$
is independent of the choice of $\omega \in \calM^G_{[\omega_0]}$. 
Moreover, if there exists a closed Fedosov star product
$\ast_{\nabla,\Omega_k(\omega)}$ for $\omega\in \calM^G_{[\omega_0]}$, then $\Tr^{\calM^G_{[\omega_0]},k}(X)$ vanishes.
\end{theorem}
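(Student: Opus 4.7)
The plan is to deduce Theorem \ref{main thm2} from Theorem \ref{main thm} by controlling the two different normalizations of the quantum moment map via Proposition \ref{prop:quantumkahler}. The main preliminary step is to check that for every $\omega \in \calM^G_{[\omega_0]}$, the triple $(\omega, \Omega_k(\omega), \nabla)$, with $\nabla$ the Levi-Civita connection of $\omega$, lies in $\mathcal C^G([\omega_0], [\Omega_k(\omega_0)])$. Conditions (b) and (d) follow from the convexity (via K\"ahler potentials) and $G$-invariance of $\calM^G_{[\omega_0]}$; condition (c) holds because $[\Omega_k(\omega)] = k\nu[\Ric(\omega)] = 2\pi k\nu\, c_1(M)$ is a topological class, independent of the representative. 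The nontrivial part is condition (a), the existence of a quantum moment map for $(\omega, \Omega_k(\omega))$, which is precisely the content of Proposition \ref{prop:quantumkahler} and is the main technical ingredient I would invoke.

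Granted that verification, Theorem \ref{main thm} applied to the family $\{(\omega, \Omega_k(\omega), \nabla)\}_{\omega \in \calM^G_{[\omega_0]}}$ yields that $\Tr^{\ast_{\nabla,\Omega_k(\omega)}}(\mu^k_X)$ is independent of $\omega$, where $\mu^k$ is normalized by \eqref{normalize_c}. By Proposition \ref{prop:quantumkahler}, the difference $\tilde{\mu}^k_X - \mu^k_X$ is a constant $c_k(X) \in \bfR[[\nu]]$ depending only on the K\"ahler class and on $X$. Therefore
\[ \Tr^{\ast_{\nabla,\Omega_k(\omega)}}(\tilde{\mu}^k_X) \;=\; \Tr^{\ast_{\nabla,\Omega_k(\omega)}}(\mu^k_X) + c_k(X)\, \Tr^{\ast_{\nabla,\Omega_k(\omega)}}(1), \]
and the normalization \eqref{normalized_tr} (together with $B(1)=1$) gives $\Tr^{\ast_{\nabla,\Omega_k(\omega)}}(1) = (2\pi\nu)^{-m}\int_M \omega^m/m!$, which depends only on the cohomology class $[\omega_0]$. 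Both summands are thus constant on $\calM^G_{[\omega_0]}$, establishing the first assertion.

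For the vanishing claim, suppose $\ast_{\nabla,\Omega_k(\omega)}$ is closed for some $\omega \in \calM^G_{[\omega_0]}$. Then its trace density is a formal constant, and combining this with \eqref{normalized_tr} forces
\[ \Tr^{\ast_{\nabla,\Omega_k(\omega)}}(f) = \frac{1}{(2\pi\nu)^m}\int_M f\,\frac{\omega^m}{m!} \]
for every $f \in C^\infty(M)[[\nu]]$. Specializing to $f = \tilde{\mu}^k_X$ and using \eqref{eq:normalizedkahler} immediately yields $\Tr^{\ast_{\nabla,\Omega_k(\omega)}}(\tilde{\mu}^k_X) = 0$; combined with the first part, this shows $\Tr^{\calM^G_{[\omega_0]},k}$ vanishes identically on $\mathfrak g$. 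The only genuine obstacle in the whole argument is the construction of the quantum moment map for each $\omega \in \calM^G_{[\omega_0]}$, which is accomplished by Proposition \ref{prop:quantumkahler}; once that is in hand, everything reduces to a direct application of Theorem \ref{main thm} and of the definition of closedness.
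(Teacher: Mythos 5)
Your argument has the same architecture as the paper's proof: verify that $(\omega,\Omega_k(\omega),\nabla)\in\mathcal C^G([\omega_0],[\Omega_k(\omega_0)])$ via Proposition \ref{prop:quantumkahler}, split $\Tr^{\ast_{\nabla,\Omega_k(\omega)}}(\tilde{\mu}^k_X)$ into $\Tr^{\ast_{\nabla,\Omega_k(\omega)}}(\mu^k_X)$ (handled by Theorem \ref{main thm}) plus the constant $\tilde{\mu}^k_X-\mu^k_X$ (controlled by Proposition \ref{prop:quantumkahler}\,(2)) times $\Tr^{\ast_{\nabla,\Omega_k(\omega)}}(1)$, and deduce the vanishing statement from closedness together with the normalization \eqref{eq:normalizedkahler}.

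There is, however, one mis-justified step. You assert that $\Tr^{\ast_{\nabla,\Omega_k(\omega)}}(1)=(2\pi\nu)^{-m}\int_M\omega^m/m!$ by applying \eqref{normalized_tr} with $B(1)=1$. But \eqref{normalized_tr} is a normalization imposed on a contractible Darboux chart (for functions supported there); globally the trace is the pairing with the trace density, $\Tr^{\ast_{\nabla,\Omega_k(\omega)}}(1)=\int_M\rho^{\nabla,\Omega_k(\omega)}\,\omega^m/m!$, and by Proposition \ref{prop:densitynu2} (or Remark \ref{remark:rho1kahler}) this equals $(2\pi\nu)^{-m}\bigl(\int_M\omega^m/m!-\nu k\int_M\Ric(\omega)\wedge\omega^{m-1}/(m-1)!+O(\nu^2)\bigr)$, which is not the symplectic volume when $k\neq0$. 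The conclusion you actually need --- that $\Tr^{\ast_{\nabla,\Omega_k(\omega)}}(1)$ does not change as $\omega$ varies in $\calM^G_{[\omega_0]}$ --- is still true, but the correct justification is the one the paper gives: by the algebraic index theorem \cite{NT}, $\Tr^{\ast_{\nabla,\Omega_k(\omega)}}(1)$ is expressed through the classes $[\omega]$, $[\Omega_k(\omega)]=2\pi k\nu\,c_1(M)$ and characteristic classes of $M$, hence is a topological invariant. Replacing your volume computation by this citation repairs the step. A minor further remark on the last paragraph: closedness only forces the trace density to be a formal \emph{constant}, not necessarily $(2\pi\nu)^{-m}$; but since $\Tr^{\ast_{\nabla,\Omega_k(\omega)}}(\tilde{\mu}^k_X)=c\int_M\tilde{\mu}^k_X\,\omega^m/m!$ for that constant $c$, the vanishing via \eqref{eq:normalizedkahler} goes through unchanged.
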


The plan after this introduction is as follows. In section 2, we review Fedosov's construction of star product, particularly Fedosov connection
on Weyl algebra bundle. The description of flat sections in Darboux charts is given in section 2. 
In section 3, the variation formula of the trace is given. In section 4, we apply the variation formula to the quantum moment map, discuss on the two normalizations
\eqref{normalize_c} and \eqref{eq:normalizedkahler} and prove Theorem \ref{main thm}, \ref{main thm1} and \ref{main thm2}. In section 5, we give explicit formulas of the invariants up to terms in $\nu^2$.

\section{Prelimaries}
In this section we describe the equivalence $B$ in \eqref{normalized_tr} when $\ast$ is the Fedosov star product $\ast_{\nabla,\Omega}$ (c.f. \cite{fed2}, \cite{fed}). We mainly follow Fedosov's paper \cite{fedtrace} incorporating non-zero $\Omega$.
We first recall the construction of Fedosov star product. Let $T_x M$ be the tangent space at $x \in M$ of the
symplectic manifold $M$ with symplectic form $\omega$. We choose a basis $(e_1, \cdots, e_n)$ of $T_x M$ 
and write $\omega_{ij} = \omega(e_i,e_j)$ and express a tangent vector $y$ as $y = y^1e_1 + \cdots y^n e_n$.
Typically, we may take $e_i = \partial/\partial x^i$ for a choice of local coordinates $(x^1, \cdots x^n)$. 
The formal Weyl algebra $W_x$ corresponding to the symplectic space $T_x M$ is an associative algebra
consisting of the formal series
\begin{equation}\label{Weyl alg}
 a(y,\nu) = \sum_{k, \ell \ge 0} \nu^k a_{k,i_1, \cdots, i_\ell} y^{i_1} \cdots y^{i_\ell}
 \end{equation}
where $\nu$ is a formal parameter and $a_{k,i_1, \cdots, i_\ell}$ are real coefficients.
The product $\circ$ of the elements $a, b \in W_x$ is defined by the {\it Weyl rule}
$$ (a\circ b)(y,\nu) =\left. \left[ \exp\left(\frac\nu2 \Lambda^{ij} \frac\partial{\partial y^i} \frac \partial{\partial z^j}\right) a(y,\nu) b(z,\nu)\right]\right|_{y=z}$$
where $(\Lambda^{ij})$ is the inverse matrix of the symplectic form $(\omega_{ij})$. Note that this 
description of $\circ$ is
independent of the choice of a basis of $T_x M$.
We prescribe the degree, called the {\it Weyl degree}, of the variables by $\deg y^i = 1$ and $\deg \nu = 2$. 
Then each term of \eqref{Weyl alg} has Weyl degree $2k + \ell$. 

Taking a union of $T_x M$ over all $x \in M$ we obtain a bundle $W$ of the formal Weyl algebras. 
The local 
sections of $W$ are of the form
$$ a(x,y,\nu) = \sum_{2k+\ell \ge 0} \nu^k a(x)_{k,i_1, \cdots, i_\ell} y^{i_1} \cdots y^{i_\ell}$$
where the Weyl degree is used in the summation expression. These can be regarded as sections
of $\sum_{r \ge 0} \nu^r \sum S^\ell T^\ast M$. The product $\circ$ extends to an algebra structure on the space $\Gamma(W)$ of the sections the Weyl algebra bundle $W$ by
$$ (a\circ b)(x,y,\nu) =\left. \left[ \exp\left(\frac\nu2 \Lambda^{ij} \frac\partial{\partial y^i} \frac \partial{\partial z^j}\right) a(x,y,\nu) b(x,z,\nu)\right]\right|_{y=z}$$

We set $\Gamma(W)\otimes \wedge(M)$ to be the set of the Weyl algebra bundle valued differential forms which are expressed locally as
$$ \sum_{2k+\ell \ge 0} \nu^k a(x)_{k\,i_1 \cdots i_\ell\,j_1\cdots j_p}\, y^{i_1} \cdots y^{i_\ell}\,dx^{j_1} \wedge
\cdots \wedge dx^{j_p}.$$
We extend $\circ$ to $\Gamma(W)\otimes \wedge(M)$ by
$$ a\otimes\alpha \circ b\otimes \beta = a\circ b \otimes \alpha \wedge \beta$$
where $a,\ b \in \Gamma(W)$ and $\alpha,\ \beta \in \wedge(M)$. Then the commutator is naturally described as
$$ [s,s^\prime] = s\circ s^\prime - (-1)^{q_1q_2} s^\prime\circ s $$
for $s \in \Gamma(W)\otimes \wedge^{q_1}(M)$ and $s^\prime \in \Gamma(W)\otimes \wedge^{q_2}(M)$. 
Note that the center of the algebra $\Gamma(W)\otimes \wedge(M)$ consists of the elements of the form
$\sum_{k=0}^\infty \nu^k \alpha_k$ with $\alpha_k$ differential forms in $\wedge(M)$. We call these elements
the {\it central elements} or {\it central forms}.

It is well-known that there is a torsion-free connection making $\omega$ parallel, called a {\it symplectic connection}. 
In terms of the Christoffel symbols the condition for symplectic connection is that 
$\omega_{i\ell}\Gamma^\ell_{jk}$ is symmetric in $i,\ j,\ k$.
It is not unique, and for any two symplectic connections with Christoffel symbols $\Gamma^i_{jk}$ and $\Gamma'{}^i_{jk}$, 
$\omega_{i\ell}(\Gamma^\ell_{jk} - \Gamma'{}^\ell_{jk})$ is symmetric in $i,\ j,\ k$. Conversely, given a symplectic
connection and a symmetric covariant 3-tensor one can construct another symplectic connection in this way. 
Thus the space
of symplectic connections on $(M,\omega)$ is an affine space modeled on the vector space of all symmetric covariant
tensors of degree 3. 

Let $\nabla$ be a symplectic connection on $(M, \omega)$, and $\Gamma_{ij}^k$ be its Christoffel symbols. 
Let $\Gamma(W)\otimes\wedge(M)$ be the space of $W$-valued differential forms on $M$.
Then the induced exterior covariant derivative $\partial$ on $\Gamma(W)\otimes\wedge(M)$ is described as
$$ \partial a := da + \frac1\nu [\overline{\Gamma},a]$$
where 
$$ \barGamma = \frac12 \omega_{\ell k} \Gamma^k_{ij}y^\ell y^j dx^i.$$
Its curvature is described as
$$ \partial^2 a = \frac1\nu [\barR, a] $$
where
$$ \overline{R} = \frac14 \omega_{ir} \mathrm{R}^r{}_{jk\ell}y^i y^j dx^k \wedge dx^\ell.$$
For a $W$-valued 1-form $\gamma \in \Gamma(W)\otimes \wedge^1(M)$ we consider a more general connection
$$ D = \partial + \frac1\nu [\gamma,\cdot]. $$
The connection $D$ is determined up to a central term of $\gamma$. For the uniqueness of $\gamma$ we require
\begin{equation*}\label{W-normalization}
\gamma_0 := \gamma|_{y = 0} = 0.
\end{equation*}
This condition is called the {\it Weyl normalization}. The curvature $\Theta$ of $D$ is given by
\begin{equation*}\label{W-curvature}
\Theta = \overline{R} + \partial\gamma + \frac1\nu \gamma\circ\gamma.
\end{equation*}
Following \cite{fed2} we call $\Theta$ the {\it Weyl curvature} when $D$ satisfies the Weyl normalization.

We wish to obtain a flattening $D = \partial + \frac1\nu[\gamma,\cdot]$ of $\partial$ in the form
$$ \gamma = \omega_{ij}y^idx^j  + r$$ 
for some $r \in \Gamma(W)\otimes \wedge^1(M)$.
Since 
$$ \delta = dx^\ell\wedge \frac{\partial}{\partial y^\ell} = -\frac1\nu [\omega_{ij}y^idx^j, \cdot]$$
we may put
\begin{equation}\label{r}
D := \partial - \delta + \frac 1\nu [r,\cdot]
\end{equation}
and seek $r$ such that $D^2 = 0$. In \cite{fed2} such $D$ is called an {\it Abelian connection}.
Under the Weyl normalization condition $r|_{y=0} = 0$, one can see using $\delta^2 = 0$ and $\delta \partial + \partial \delta = 0$ that the Weyl curvature $\Theta$ is given by
$$ \Theta = - \frac12 \omega_{ij} dx^i \wedge dx^j + \overline{R} - \delta r + \partial r + \frac1\nu r\circ r.$$
Since 
$$ D^2 = \frac1\nu [\Theta, \cdot]$$
$D$ is Abelian if $\barR + \partial r - \delta r + \frac1\nu r\circ r$ is a central
2-form, that is, a 2-form in $\nu \Omega^2(M)[[\nu]]$. Introduce an operator $\delta^{-1}$ by
\begin{equation*}
\delta^{-1}(a_{pq}) = 
\left \{
\begin{array}{l}
\frac1{p+q}\, y^k \,i(\frac{\partial}{\partial x^k})a_{pq} \qquad (p+q \ne 0) \\
0 \qquad\qquad\qquad\qquad\ \, (p+q = 0)
\end{array}
\right.
\end{equation*}
for $a_{pq} \in \Gamma(W)\otimes \wedge^q(M)$ with degree $p$ symmetric term in $y$.
By the Hodge decomposition (see e.g. (5.1.7) in \cite{fed2}), we have for $a \in \Gamma(W)\otimes \wedge(M)$ 
\begin{equation}\label{Hodge}
\delta^{-1}\delta a + \delta\delta^{-1} a = a - a_{00}.
\end{equation}
Then we have now the standard theorem by Fedosov:
\begin{theorem}[Fedosov \cite{fed2}]\label{FedThm}
For any $\Omega \in \nu \Omega^2(M)[[\nu]]$ there exists a unique $r \in \Gamma(W)\otimes\wedge^1(M)$ with $\delta^{-1}r = 0$
and $W$-degree larger than 2 such that 
$$ \barR + \partial r - \delta r + \frac1\nu r\circ r = \Omega, $$
that is $D$ is Abelian.
\end{theorem}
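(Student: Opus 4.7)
My plan is to convert the curvature equation $\barR+\pa r-\delta r+\frac{1}{\nu}r\circ r=\Omega$ into a fixed-point equation for $\delta^{-1}$, solve it by induction on Weyl degree, and then verify a posteriori that the iterative solution satisfies the full identity. Setting $B(r):=\barR+\pa r+\frac{1}{\nu}r\circ r-\Omega$ and rewriting the equation as $\delta r=B(r)$, I would apply $\delta^{-1}$. Since $r$ is a 1-form of Weyl degree $>2$ we have $r_{00}=0$, and together with the imposed condition $\delta^{-1}r=0$ the Hodge decomposition \eqref{Hodge} yields $r=\delta^{-1}\delta r$; the equation thus becomes the fixed-point problem $r=\delta^{-1}B(r)$. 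The identity $(\delta^{-1})^2=0$, immediate from $y^k y^\ell\,i(\pa_k)i(\pa_\ell)=0$, ensures that any solution of this fixed-point equation automatically satisfies $\delta^{-1}r=0$, recovering the prescribed side condition.

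To solve the fixed point, I exploit that $\delta^{-1}$ strictly raises Weyl degree while $\pa$ preserves it, and the nonlinearity $\frac{1}{\nu}r\circ r$, built from $r$-components of Weyl degree $\geq 3$, has Weyl degree $\geq 4$. Since $\barR$ has Weyl degree $2$ and $\Omega\in\nu\Omega^2(M)[[\nu]]$ has Weyl degree $\geq 2$, the Weyl-degree $k$ part of $\delta^{-1}B(r)$ depends only on components of $r$ of Weyl degree strictly less than $k$. Writing $r=\sum_{k\geq 3}r_{(k)}$ and $\Omega=\sum_{j\geq 1}\nu^j\Omega^{(j)}$, one starts from $r_{(3)}=\delta^{-1}(\barR-\nu\Omega^{(1)})$ and recursively constructs $r_{(k)}$ for $k\geq 4$, establishing existence and uniqueness simultaneously.

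For verification, set $C:=\barR+\pa r-\delta r+\frac{1}{\nu}r\circ r-\Omega$; we wish to show $C=0$. From $r=\delta^{-1}B(r)$ and $B(r)_{00}=0$ (as $B(r)$ is a 2-form), one computes $C=-\delta^{-1}\delta B(r)$, whence $\delta^{-1}C=0$ by $(\delta^{-1})^2=0$. The Bianchi identity $D(D^2 a)=D^2(Da)$ for the Fedosov connection $D=\pa-\delta+\frac{1}{\nu}[r,\cdot]$ forces its Weyl curvature $\Theta$ to satisfy the condition that $D\Theta$ is central; since $D$ acts as $d$ on central forms and both $\omega$ and $\Omega$ are closed, this reduces to $DC=0$, i.e.\ $\delta C=\pa C+\frac{1}{\nu}[r,C]$. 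Applying $\delta^{-1}$ and the Hodge decomposition once more yields $C=\delta^{-1}\pa C+\frac{1}{\nu}\delta^{-1}[r,C]$, whose right-hand side at Weyl degree $k$ involves only components of $C$ of Weyl degree less than $k$; combined with the vanishing of $C$ at Weyl degree $2$ forced by the base of the iteration, induction gives $C\equiv 0$. The main obstacle I anticipate is the Bianchi step, which requires careful bookkeeping of the graded Leibniz rule for $\pa$, $\delta$, and $\frac{1}{\nu}[r,\cdot]$ across the mixed Weyl-degree/form-degree grading in order to extract $DC=0$ from the centrality of $D\Theta$.
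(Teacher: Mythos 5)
Your recursive construction coincides with the paper's (very brief) proof: the paper states only that $r$ is obtained recursively in its Weyl degrees from $r=\delta^{-1}(\barR-\Omega)+\delta^{-1}(\pa r+\frac1\nu r\circ r)$, which is exactly your fixed-point equation $r=\delta^{-1}B(r)$, and your degree count ($\delta^{-1}$ raises Weyl degree by one, $\pa$ preserves it, $\frac1\nu r\circ r$ only involves strictly lower components) is the correct justification of well-foundedness and uniqueness. The observation $(\delta^{-1})^2=0$, which recovers the side condition $\delta^{-1}r=0$ for free, is also fine.

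The gap is in the a posteriori verification, at exactly the step you flag as the anticipated obstacle. From the abstract identity $D(D^2a)=D^2(Da)$ you can only conclude that $D\Theta$ is \emph{central}; hence $DC=D\Theta-d(-\omega+\Omega)=D\Theta$ is some central $3$-form $c$, and nothing in that argument forces $c=0$. (Pushing one step further only gives $dc=D(DC)=\frac1\nu[\Theta,C]=\frac1\nu[C,C]=0$, since $2$-forms commute; a closed central $3$-form need not vanish.) This weaker statement is genuinely insufficient for your induction: with $DC=c$ you obtain $C=\delta^{-1}(\pa C+\frac1\nu[r,C])-\delta^{-1}c$, and $\delta^{-1}c=\frac13\,y^k\,i(\pa/\pa x^k)c$ is not zero for a general central $3$-form, so the degree-by-degree vanishing of $C$ breaks at the base. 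What is actually needed is the exact Bianchi identity $D\Theta=\pa\Theta-\delta\Theta+\frac1\nu[r,\Theta]=0$, which does not follow from $D^3=D^3$ but from a direct computation with $\gamma=\omega_{ij}y^i\,dx^j+r$, using the second Bianchi identity $\pa\barR=0$ of the symplectic connection, $\pa^2=\frac1\nu[\barR,\cdot]$, $\delta\pa+\pa\delta=0$, $\delta\barR=0$ (first Bianchi identity plus the symmetry of $\omega_{ir}\mathrm{R}^r{}_{jk\ell}$), and the associativity identity $[\gamma,\gamma\circ\gamma]=0$. Once $D\Theta=0$ is established, $DC=0$ and your induction $C=\delta^{-1}(\pa C+\frac1\nu[r,C])$ closes as you describe. (A cosmetic point: with the Hodge decomposition \eqref{Hodge} one gets $C=+\delta^{-1}\delta B(r)$, not $-\delta^{-1}\delta B(r)$; this does not affect the conclusion $\delta^{-1}C=0$.)
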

\noindent
Note that the condition $\delta^{-1}r = 0$ implies the Weyl normalization condition $r|_{y=0} = 0$ and
the Weyl curvature is
\begin{equation*}\label{W-curvature2}
\Theta = - \omega + \Omega
\end{equation*}
where we put 
\begin{equation}\label{symp-form}
\omega =  \frac12 \omega_{ij} dx^i \wedge dx^j.
\end{equation}
The proof is given by showing that $r$ is obtained recursively in terms of its degrees by using
$$ r = \delta^{-1}(\barR - \Omega) + \delta^{-1}(\partial r + \frac1\nu r\circ r).$$

Thus $D$ is uniquely determined under the conditions $\delta^{-1}r = 0$ and $W$-degree of $r$ 
larger than 2 once we are given
a symplectic connection $\nabla$ and the central formal 2-form $\Omega$.
We call this connection $D$ the {\it Fedosov connection}.
Consider the space $\Gamma(W)_D$ of flat (or parallel) sections with respect to $D$, i.e. the sections $a$ with $Da = 0$. Then it is shown \cite{fed2} that
$\sigma : \Gamma(W)_D \to C^\infty(M)[[\nu]]$ sending $a \in \Gamma(W)_D$ to $a_{00} \in C^\infty(M)[[\nu]]$
is a bijection. Its inverse, denoted by $Q$, sending $a_{00}$ to $a:= Qa_{00}$ can be constructed by solving
recursively 
$$ a = a_{00} + \delta^{-1}(\partial a + \frac 1\nu [r,a])$$
since $\delta^{-1}$ increases the Weyl degree at least by $1$. $Q$ is explicitly expressed as
\begin{equation}\label{Q}
Qa_{00} = \sum_{k\ge 0} (\delta^{-1}(\partial + \frac1\nu[r,\cdot]))^k a_{00}.
\end{equation}
Since $D$ is a derivation of $\circ$, i.e. 
$$ D(a\circ b) = Da\circ b + a \circ Db,$$
$\Gamma(W)_D$  is closed under the product $\circ$. Then  
the product $\circ$ on $\Gamma(W)_D$ induces through $Q$ a $\ast$-product $\ast_{\nabla, \Omega}$ on $C^\infty(M)[[\nu]]$
which we call the {\it Fedosov star product}.

In the same way we can prove the following lemma.
\begin{lemma}\label{D_inverse}
Suppose $b \in \Gamma(W)\otimes \wedge^1(M)$ satisfy $Db = 0$. Then the equation $Da = b$ admits a
unique solution $a \in \Gamma(W)$, denoted by $a= D^{-1}b$, such that $a|_{y=0} = 0$.
\end{lemma}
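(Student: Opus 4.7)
The plan is to mimic the recursive construction used in Fedosov's Theorem \ref{FedThm}. Writing $D = \partial - \delta + \frac{1}{\nu}[r,\cdot]$, the equation $Da = b$ reads
$$\delta a = \partial a + \frac{1}{\nu}[r,a] - b.$$
Applying $\delta^{-1}$ and using the Hodge identity \eqref{Hodge} together with the fact that $a$ is a $0$-form (so $\delta^{-1}a = 0$) and the normalization $a_{00} = a|_{y=0} = 0$, this becomes the fixed-point equation
$$a = -\delta^{-1}b + \delta^{-1}\partial a + \frac{1}{\nu}\delta^{-1}[r,a].$$
Since $\partial$ preserves Weyl degree and $r$ has Weyl degree at least $3$, the operators $\delta^{-1}\partial$ and $\frac{1}{\nu}\delta^{-1}[r,\cdot]$ strictly raise Weyl degree (by at least $1$ and at least $2$ respectively). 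Hence the fixed-point equation is solved uniquely by iteration starting from $a^{(0)} = -\delta^{-1}b$, producing a well-defined $a \in \Gamma(W)$ with $a|_{y=0} = 0$, and the same Weyl-degree argument applied to the difference of two solutions gives uniqueness.

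Next I would verify that this $a$ actually satisfies $Da = b$, not merely its $\delta^{-1}$-image. Set $c := Da - b \in \Gamma(W) \otimes \wedge^1(M)$. Two properties of $c$ are available. First, since the Weyl curvature $\Theta = -\omega + \Omega$ is a central form, $D^2 = \frac{1}{\nu}[\Theta,\cdot]$ vanishes on $\Gamma(W)\otimes\wedge(M)$, so combined with $Db = 0$,
$$Dc = D^2 a - Db = 0.$$
Second, applying $\delta^{-1}$ directly to the identity $c = \partial a - \delta a + \frac{1}{\nu}[r,a] - b$, using $\delta^{-1}\delta a = a$ (Hodge for the $0$-form $a$ with $a_{00}=0$), and comparing with the fixed-point equation yields $\delta^{-1}c = 0$.

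The final step exploits these two properties of $c$. From $Dc = 0$ we have $\delta c = \partial c + \frac{1}{\nu}[r,c]$. Applying $\delta^{-1}$ and using Hodge for the $1$-form $c$ (so $c_{00} = 0$, and $\delta\delta^{-1}c = 0$ from $\delta^{-1}c=0$), we obtain
$$c = \delta^{-1}\partial c + \frac{1}{\nu}\delta^{-1}[r,c].$$
Since both operators on the right strictly raise Weyl degree, induction on the lowest Weyl degree occurring in $c$ forces $c = 0$, giving $Da = b$ as required. The main subtlety is precisely this consistency check: one must see that solving the $\delta^{-1}$-image of the equation actually solves the original equation. This is made possible by the twin inputs $Db = 0$ and centrality of $\Theta$, which are the same ingredients that produced the flatness $D^2 = 0$ and thereby allow the construction of the Fedosov star product itself.
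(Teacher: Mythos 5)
Your proposal is correct and follows the same route as the paper: reduce $Da=b$ via the Hodge identity \eqref{Hodge} to the fixed-point equation $a=-\delta^{-1}b+\delta^{-1}(\partial a+\frac1\nu[r,a])$ and solve it by iteration using the degree-raising property of $\delta^{-1}$, which also gives uniqueness. The one substantive addition is your closing consistency check that the iterative solution of the $\delta^{-1}$-image equation really satisfies $Da=b$ (setting $c:=Da-b$, showing $Dc=0$ and $\delta^{-1}c=0$, and concluding $c=0$ by the same degree argument); the paper's written proof leaves this step implicit, even though it is exactly the point where the hypothesis $Db=0$ and the flatness $D^2=0$ of the Abelian connection are actually used. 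So your argument is, if anything, more complete than the one in the paper, and all the individual steps (the degree counts for $\delta^{-1}\partial$ and $\frac1\nu\delta^{-1}[r,\cdot]$, the vanishing of $c_{00}$ for the $1$-form $c$, and the use of centrality of the Weyl curvature $-\omega+\Omega$) check out.
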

\begin{proof}
We use the Hodge decomposition \eqref{Hodge} for our $a$. Since $a$ is a $0$-form we have $\delta^{-1}a = 0$, and also have 
$a_{00} = a|_{y=0} = 0$. Thus we have  
$$a = \delta^{-1}\delta a.$$ 
From this and the equation 
$$Da = \partial a - \delta a + \frac1\nu [r,a] = b$$ 
we need to solve 
\begin{equation*}
 a = - \delta^{-1} b + \delta^{-1} (\partial a + \frac1\nu[r,a]).
 \end{equation*}
 This can be solved recursively since $\delta^{-1}$ raises degree by $1$, and the solution is given explicitly 
using the same expression as $Q$ (Equation \eqref{Q})
in the form
\begin{equation}\label{D_inverse2}
a = \sum_{k\ge 0} (\delta^{-1}(\partial + \frac1\nu[r,\cdot]))^k(-\delta^{-1}b).
\end{equation}
\end{proof}

Later we will often use the relation 
\begin{equation}\label{D_inverse3}
D^{-1} = - Q\circ \delta^{-1}.
\end{equation}
Next we recall the following proposition due to Fedosov, see Proposition 5.5.5 and 5.5.6 in \cite{fed}. Since the characterization of $V$ is used in later arguments we re-produce its proof in this paper. 
\begin{proposition}[\cite{fed}]\label{equivalence}
On contractible Darboux chart $U$ we have an equivalence 
$$ A : \Gamma(W)_D|_U \to \Gamma(W|_U)_{D_{\mathrm{flat}}}$$
between the Fedosov connection $D$ and $D_{\mathrm{flat}} = d - \delta$
and 
this equivalence $A$ is expressed as 
$$ Aa = V\circ a \circ V^{-1}$$
for some $V \in \Gamma(W|_U)$.
\end{proposition}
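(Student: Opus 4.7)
The plan is to exhibit $A$ as an inner automorphism $A(a) = V \circ a \circ V^{-1}$, constructing $V \in \Gamma(W|_U)$ via a Fedosov-type fixed-point iteration. Setting $\gamma_0 := \overline{\Gamma}+r$, so that $D = D_{\mathrm{flat}} + \tfrac{1}{\nu}[\gamma_0,\cdot]$, a direct Leibniz computation on $0$-forms shows that conjugation by $V$ intertwines the derivations $D$ and $D_{\mathrm{flat}}$ if and only if
$$ DV\circ V^{-1} \;=\; \tfrac{1}{\nu}\gamma_0 + \mu $$
for some scalar (central) $1$-form $\mu$ on $U$. Rearranged, this is the transport equation
$$ D_{\mathrm{flat}}V \;=\; \mu\cdot V + \tfrac{1}{\nu}V\circ\gamma_0. $$

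Next I would verify integrability: applying $D$ to the relation $DV = (\tfrac{1}{\nu}\gamma_0+\mu)\circ V$ and using $D^2V=0$ together with the Weyl-curvature identity $D_{\mathrm{flat}}\gamma_0 + \tfrac{1}{\nu}\gamma_0\circ\gamma_0 = \Theta = -\omega+\Omega$ supplied by Theorem \ref{FedThm}, one finds that $\mu$ must satisfy
$$ d\mu \;=\; \tfrac{1}{\nu}(\omega-\Omega). $$
The right-hand side is closed, and because $U$ is a contractible Darboux chart the Poincar\'e lemma provides such a formal $1$-form $\mu$.

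With $\mu$ fixed, I would construct $V = 1 + \sum_{k\ge 1}V_k$, graded by Weyl degree, by recursion. Rewriting the transport equation as $V = 1 + \delta^{-1}\bigl(dV + \tfrac{1}{\nu}[\gamma_0,V] - \mu\cdot V - \tfrac{1}{\nu}V\circ\gamma_0\bigr)$ via the Hodge decomposition \eqref{Hodge} under the normalizations $V|_{y=0}=1$ and $\delta^{-1}V=0$, the operator $\delta^{-1}$ raises the Weyl degree by one, and because $\gamma_0$ has Weyl degree $\ge 2$ each right-hand side term at stage $k$ involves only strictly lower-degree components of $V$. This produces a unique $V_k$ by iteration, in exact analogy with the construction of $r$ in Theorem \ref{FedThm}.

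The main technical obstacle is to check that at each stage the argument of $\delta^{-1}$ is compatible with the Hodge decomposition, so that the recursion closes consistently; this compatibility is precisely enforced by the integrability $d\mu = \tfrac{1}{\nu}(\omega-\Omega)$ chosen above. Once $V$ is in hand, $A(a) := V\circ a\circ V^{-1}$ is manifestly a $\circ$-algebra isomorphism and, by construction, conjugates $D$ into $D_{\mathrm{flat}}$, thus restricting to the claimed bijection $\Gamma(W)_D|_U \to \Gamma(W|_U)_{D_{\mathrm{flat}}}$.
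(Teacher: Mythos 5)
Your overall strategy---solving the conjugation (transport) equation $V^{-1}\circ D_{\mathrm{flat}}V = \tfrac1\nu\gamma_0+\mu$ directly by a Weyl-degree recursion, instead of the paper's homotopy through the interpolating connections $D_s$ and the ODE $\tfrac{d}{ds}V_s = \tfrac1\nu H(s)\circ V_s$---is a legitimate and arguably more direct route to the same statement. But your integrability computation contains an error that breaks the proof as written. With $\gamma_0 = \overline{\Gamma}+r$ one has
$$ D_{\mathrm{flat}}\gamma_0 + \frac1\nu\,\gamma_0\circ\gamma_0 \;=\; \Omega, $$
\emph{not} $-\omega+\Omega$. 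The Weyl curvature $\Theta=-\omega+\Omega$ of $D$ is computed relative to $d$ with the full generator $\omega_{ij}y^idx^j+\overline{\Gamma}+r$; the $-\omega$ term is precisely the curvature contribution of the $\delta$-part, which you have already absorbed into $D_{\mathrm{flat}}$. (The identity $\Omega_t = D_{\mathrm{flat}}r_t + \tfrac1\nu r_t\circ r_t$, with $r_t$ playing the role of your $\gamma_0$, appears explicitly in the proof of Theorem \ref{variation1}.) Since $D_{\mathrm{flat}}^2=0$ as an operator, applying $D_{\mathrm{flat}}$ to $D_{\mathrm{flat}}V = V\circ(\tfrac1\nu\gamma_0+\mu)$ yields $0 = V\circ(\tfrac1\nu\Omega + d\mu)$, so the integrability condition is
$$ d\mu = -\frac1\nu\,\Omega, \qquad\text{i.e.}\qquad \mu = -\frac1\nu\,\alpha \ \text{ with } \ \Omega = d\alpha \text{ on } U, $$
which is exactly the paper's normalization \eqref{V}. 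With your choice $d\mu = \tfrac1\nu(\omega-\Omega)$ the transport equation is not integrable (one is left with $0 = V\circ\tfrac1\nu\omega$), so the $V$ produced by your recursion would fail the very consistency check you defer to the end; no choice compatible with the Hodge decomposition can repair this for that $\mu$.

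Once $\mu$ is corrected, the rest of your argument goes through: the fixed-point equation should read $V = 1+\delta^{-1}\bigl(dV - \mu\circ V - \tfrac1\nu V\circ\gamma_0\bigr)$ (the extra $\tfrac1\nu[\gamma_0,V]$ in your displayed recursion double-counts the $\gamma_0$-terms), the recursion closes because $\delta^{-1}$ raises the Weyl degree by one while $\tfrac1\nu\gamma_0$ has Weyl degree $\ge 0$, $V$ is invertible since $V = 1 + (\text{terms of positive degree})$, and the centrality of $V^{-1}D_{\mathrm{flat}}V-\tfrac1\nu\gamma_0$ gives the intertwining in both directions, hence a bijection. Compared with the paper's proof, your construction produces $V$ in one recursion rather than integrating a family $V_s$, but it requires carrying out the consistency verification (that the solution of the $\delta^{-1}$-fixed-point equation solves the original transport equation) that the paper avoids by working with $D_s^{-1}$ via Lemma \ref{D_inverse} along the homotopy.
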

\begin{proof}
We look for $A_s$ and $D_s$ interpolating between $D$ and $D_{\mathrm{flat}}$. By \eqref{r} we have 
$$ D = D_{\mathrm{flat}} + \frac1\nu[r,\cdot] $$
on $U$. We write the symplectic connection we chose as $d + \Gamma$ and the flat connection 
$\nabla_{\mathrm{flat}} = d$, and join them by $\nabla_s = d + (1-s)\Gamma$. Since $\nabla$ and 
$\nabla_{\mathrm{flat}}$ are both symplectic connections for the symplectic form in the Darboux chart $U$
the affine line $\nabla_s$ are symplectic connections for all $s$. We also set $\Omega_s := (1-s)\Omega$,
and build the Fedosov connection for $\nabla_s$ and $\Omega_s$ so that
the Weyl curvature of $D_s$ is $-\omega + \Omega_s = -\omega + (1-s)\Omega$. 
We define 
$r_s \in \Gamma(W)\otimes \wedge^1(M)$ by 
$$ D_s = D - \frac1\nu [r_s,\cdot].$$
Note that $r_0 = 0$. Since $ D = D_s + \frac1\nu [r_s,\cdot]$ and 
$$ D^2 = D_s^2 + \frac1\nu[D_s r_s,\cdot] + \frac1{\nu^2}[r_s\circ r_s,\cdot]$$
we have
\begin{equation}\label{curvature_s}
\Omega = (1-s)\Omega + D_s r_s + \frac1\nu r_s\circ r_s.
\end{equation}
Taking the derivative of \eqref{curvature_s} with respect to $s$ we obtain
\begin{equation}\label{curvature_s2}
-\Omega + D_s \dot{r}_s = 0
\end{equation}
since
$$ \dot{D}_s r_s = - \frac1\nu[\dot{r}_s,r_s] = - \frac1\nu(\dot{r}_s\circ r_s + r_s\circ \dot{r}_s).$$
On the contractible Darboux chart we can write the closed $2$-form $\Omega$ as
$$ \Omega = d\alpha$$
for some $1$-form $\alpha$. Then by using \eqref{curvature_s2} we have
\begin{eqnarray*}
D_s(-\alpha + \dot{r}_s) &=& -d\alpha + D_s \dot{r}_s\\
&=& -\Omega + D_s \dot{r}_s\\
&=& 0.
\end{eqnarray*}
By Lemma \ref{D_inverse} there is a unique solution $H(s) \in \Gamma(W|_U)$ such that 
$H(s)|_{y=0} = 0$ and
\begin{equation}\label{H(s)}
D_sH(s) = -\alpha + \dot{r}_s.
\end{equation}
Note that, from the construction of the Fedosov connection in Theorem \ref{FedThm} and 
\eqref{D_inverse2}, the degree $H(s)$ is at least $3$ as $\delta^{-1}$ raises the Weyl degree by $1$.
We then solve 
\begin{equation*}\label{V_s}
\frac{d}{ds} V_s = \frac1\nu H(s)\circ V_s
\end{equation*}
of $V_s \in \Gamma(W|_U)$ with $V_0 = 1$. This can be solved solving the integral equation 
$$ V_s = 1 + \int_0^s \frac1\nu H(\sigma)\circ V_\sigma d\sigma$$
recursively
using the Weyl degree. 
The iterations are completed since the integral operator on the right hand side raises the Weyl degree by $1$.
Then we have 
\begin{eqnarray*}
&&\frac{d}{ds} (V_s^{-1}\circ D_s V_s - \frac1\nu r_s) \\
&=& 
-\frac1\nu V_s^{-1}\circ H(s)\circ D_s V_s - V_s^{-1}\circ (\frac1\nu [\dot{r}_s,V_s]) + V_s^{-1}\circ D_s (\frac1\nu H(s)\circ V_s) - \frac1\nu \dot{r}_s\\
&=& \frac1\nu V_s^{-1}\circ (D_s H(s) - \dot{r}_s)\circ V_s\\
&=& -\frac1\nu V_s^{-1}\circ \alpha\circ V_s\\
&=& -\frac1\nu\alpha
\end{eqnarray*}
where we have used \eqref{H(s)} and the fact that $\alpha$ is central. This shows
\begin{equation}\label{V_s2}
V_s^{-1}\circ D_s V_s - \frac1\nu r_s = -\frac{s}\nu\alpha.
\end{equation}
Finally we define $A_s : \Gamma(W)_D|_U \to (\Gamma(W)|_U)_{D_s}$ by
$a \mapsto V_s\circ a \circ V_s^{-1}.$ Then we see for $a \in \Gamma(W)_D|_U$ that 
\begin{eqnarray*}\label{A_s1}
D_sA_s a &=& V_s\circ[V_s^{-1}\circ D_s V_s - \frac1\nu r_s, a]\circ V_s^{-1}\\
&=& V_s\circ[-\frac1\nu\alpha,a]\circ V_s^{-1}\nonumber\\
&=& 0 \nonumber
\end{eqnarray*}
since $\alpha$ is central. 
$V:=V_1$ is the one we desired. This completes the proof of Proposition \ref{equivalence}.
\end{proof}

Note in particular, \eqref{V_s2} shows 
\begin{equation}\label{V}
V^{-1}\circ D_{\mathrm{flat}} V - \frac1\nu r = -\frac{1}\nu\alpha.
\end{equation}

Now, since $B = \mathrm{ev}_{y=0}\circ A \circ Q$, the normalized trace \eqref{normalized_tr} can be expressed as
\begin{eqnarray}\label{tr}
\Tr^{\ast_{\nabla,\Omega}}(F) &=& (2\pi\nu)^{-m}\int_M (AQ(F))|_{y=0} \frac{\omega^m}{m!}\nonumber\\
&=& (2\pi\nu)^{-m}\int_M (V\circ Q(F)\circ V^{-1})|_{y=0} \frac{\omega^m}{m!}.
\end{eqnarray}

\section{Variation Formula}
Let $\nabla^t$ be a family of symplectic connections, and $\Omega_t$ a family of closed formal $2$-forms
in the same cohomology class. We write 
$$\Omega_t - \Omega_0 = d\beta_t.$$ 
For the pair of $\nabla^t$ and
$\Omega_t$ we have the Fedosov connection $D_t$ and the Fedosov star product $\ast_{\nabla^t,\Omega_t}$. 
We denote respectively by $\Tr^{\ast_{\nabla^t,\Omega_t}}$ and $\rho^{\nabla^t,\Omega_t}$ the trace and 
its trace density with
respect to $\ast_{\nabla^t,\Omega_t}$. We also use the notations $D_t^{-1}$ and $Q_t$ for $D^{-1}$ in 
Lemma \ref{D_inverse} and $Q$ in \eqref{Q} with respect to $t$. Let us write the variation formula for the trace, our proof follows Fedosov's paper \cite{fedtrace} incorporating variations of $\Omega$.
\begin{theorem}\label{variation1}
With the notations being as above we have for any formal function $F \in C^\infty(M)[[\nu]]$ 
$$ \frac{d}{dt} \Tr^{\ast_{\nabla^t,\Omega_t}}(F) = (2\pi\nu)^{-m}\int_M \frac1\nu [D_t^{-1}(\dot{\bar\Gamma} - \dot{\beta}), Q_t(F)]|_{y=0}\ 
\rho^{\nabla^t,\Omega_t} \frac{\omega^m}{m!}.$$
\end{theorem}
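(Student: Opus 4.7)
My strategy is to differentiate the local trace formula \eqref{tr} in a contractible Darboux chart and then globalize via the trace density $\rho^{\nabla^t,\Omega_t}$. Throughout I interpret $\dot{\bar\Gamma}$ in the statement as shorthand for the full variation $\dot{\bar\Gamma}_t+\dot r_t$ of the Fedosov connection $1$-form; this is the natural reading under which $D_t^{-1}(\dot{\bar\Gamma}-\dot\beta)$ is well defined via Lemma \ref{D_inverse}. Indeed, differentiating the flatness identity $D_t^2 = \frac{1}{\nu}\ad(-\omega+\Omega_t) = 0$ in $t$, and using that $\dot{\bar\Gamma}_t$ is $\delta$-closed (its coefficients are fully symmetric) together with the symmetry of the $1$-form Weyl bracket, one obtains $D_t(\dot{\bar\Gamma}_t+\dot r_t) = \dot\Omega_t = d\dot\beta_t$. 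Since $\dot\beta_t$ is a central $1$-form, $D_t\dot\beta_t = d\dot\beta_t$, so $D_t(\dot{\bar\Gamma}_t+\dot r_t-\dot\beta_t) = 0$ and Lemma \ref{D_inverse} produces a unique $\xi_t := D_t^{-1}(\dot{\bar\Gamma}_t+\dot r_t-\dot\beta_t)\in\Gamma(W)$ with $\xi_t|_{y=0}=0$.

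Differentiating $D_tQ_t(F)=0$ with $\dot D_t = \frac{1}{\nu}\ad(\dot{\bar\Gamma}_t+\dot r_t)$ gives $D_t\dot Q_t(F) = -\frac{1}{\nu}[\dot{\bar\Gamma}_t+\dot r_t, Q_t(F)]$, while $\dot Q_t(F)|_{y=0}=0$ because $F$ is $t$-independent. By the graded Leibniz rule and centrality of $\dot\beta_t$, $D_t[\xi_t,Q_t(F)] = [D_t\xi_t, Q_t(F)] = [\dot{\bar\Gamma}_t+\dot r_t, Q_t(F)]$, so the uniqueness in Lemma \ref{D_inverse} forces
\[
\dot Q_t(F) \;=\; \tfrac{1}{\nu}\bigl(Q_t\bigl([\xi_t,Q_t(F)]|_{y=0}\bigr) - [\xi_t, Q_t(F)]\bigr).
\]
On a contractible Darboux chart $U$, differentiating \eqref{tr} and substituting this expression, together with $\dot V_t^{-1}=-V_t^{-1}\dot V_tV_t^{-1}$, gives
\[
\tfrac{d}{dt}(V_tQ_t(F)V_t^{-1})\big|_{y=0} \;=\; \tfrac{1}{\nu}B_t\bigl([\xi_t,Q_t(F)]|_{y=0}\bigr) \;+\; \bigl[\dot V_t V_t^{-1} - \tfrac{1}{\nu}V_t\xi_t V_t^{-1},\; V_tQ_t(F)V_t^{-1}\bigr]\big|_{y=0},
\]
where $B_t=\mathrm{ev}_{y=0}\circ A_t\circ Q_t$ as in \eqref{tr}.

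To finish, I differentiate identity \eqref{V} of Proposition \ref{equivalence} in $t$, which expresses $V_t^{-1}\dot V_t$ in terms of $D_t^{-1}(\dot{\bar\Gamma}_t+\dot r_t-\dot\alpha_t)$ on $U$, where $\Omega_t=d\alpha_t$ locally, up to a $D_t$-flat correction. Because $\dot\alpha_t-\dot\beta_t$ is closed on the contractible $U$, transporting $\dot V_tV_t^{-1}-\tfrac{1}{\nu}V_t\xi_tV_t^{-1}$ by $A_t$ produces a sum of a $D_{\mathrm{flat}}$-flat section (whose commutator with the flat section $A_tQ_t(F)$ evaluated at $y=0$ is a Moyal commutator and so integrates to zero chart-wise against $\omega^m/m!$) and a term reducible, via commutativity of $y$-independent functions with flat sections under the Weyl product, to a further Moyal commutator. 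Assembling the chart-wise identities through a partition of unity, the commutator contribution cancels globally and only the integral of $\frac{1}{\nu}B_t\bigl([\xi_t,Q_t(F)]|_{y=0}\bigr)$ survives; by the definition of the trace density this integral equals $(2\pi\nu)^{-m}\int_M\frac{1}{\nu}[\xi_t,Q_t(F)]|_{y=0}\,\rho^{\nabla^t,\Omega_t}\,\omega^m/m!$, which is the claimed formula. The main obstacle is precisely this partition-of-unity assembly: one must carefully track the local primitive $\alpha_t$ against the globally chosen $\beta_t$, confirm that the non-flat remainders reduce to Moyal commutators that cancel by the Moyal trace property, and identify $\rho^{\nabla^t,\Omega_t}$ as the trace-normalization factor that emerges from this gluing.
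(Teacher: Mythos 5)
Your overall architecture is the same as the paper's (differentiate the chart formula \eqref{tr}, solve for $\dot Q_t(F)$ and $V_t^{-1}\dot V_t$ via Lemma \ref{D_inverse}, dispose of the flat remainder, and read off the trace density), but the step you yourself flag as ``the main obstacle'' is left with a sketch that does not work as stated, and it is precisely where the content of the proof lies. Your claim that the commutator of two $D_{\mathrm{flat}}$-flat sections, evaluated at $y=0$, ``integrates to zero chart-wise against $\omega^m/m!$'' is false: the Moyal trace property $\int [f,g]_{\mathrm{Moyal}}\,\omega^m=0$ requires compact support (or a closed manifold); on a chart there are boundary terms, so no partition-of-unity assembly of chart-wise vanishing statements is available. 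The correct mechanism, which is global, is that the flat correction $b:=V_t^{-1}\dot V_t-\tfrac1\nu\xi_t$ satisfies $D_tb=0$, hence $b=Q_t(b_0)$ for a single function $b_0$, and its entire contribution is $\Tr^{\ast_{\nabla^t,\Omega_t}}\bigl([b_0,F]_{\ast_{\nabla^t,\Omega_t}}\bigr)=0$ by the trace property of the \emph{global} star product --- not by chart-wise Moyal integration. Likewise your proposed handling of the discrepancy between the local primitive $\alpha_t$ and the global $\beta_t$ (``a term reducible \dots to a further Moyal commutator'') leans on the same invalid chart-wise cancellation; the clean fix is simply to choose $\alpha_t=\alpha_0+\beta_t$ on each chart so that $\dot\alpha_t=\dot\beta_t$ and the discrepancy never arises.

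A second, smaller gap: you declare that $\dot{\bar\Gamma}$ in the statement ``is shorthand for'' the full variation $\dot{\bar\Gamma}_t+\dot r_t$. The theorem means the Christoffel part literally, and the passage from the full variation to $\dot{\bar\Gamma}$ alone is a (short but necessary) step: by \eqref{D_inverse3} one has $D_t^{-1}=-Q_t\circ\delta^{-1}$, and Fedosov's normalization $\delta^{-1}r_t=0$ gives $\delta^{-1}\dot r_t=0$, hence $D_t^{-1}(\dot{\bar\Gamma}_t+\dot r_t)=D_t^{-1}\dot{\bar\Gamma}_t$. Without this line you have proved a formula whose right-hand side is a priori a different object (and note that $D_t^{-1}\dot{\bar\Gamma}_t$ itself is then to be read through the explicit formula $-Q_t\delta^{-1}$, since $\dot{\bar\Gamma}_t$ alone is not $D_t$-closed). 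Your computations of $\dot Q_t(F)$ and of $D_t(V_t^{-1}\dot V_t)$ are correct and equivalent to the paper's \eqref{solvable} and \eqref{tr5}, so with these two repairs the argument closes.
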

\begin{proof} Write locally on the contractible Darboux Chart $U$ as
\begin{equation}\label{D_t}
 D_t = D_\mathrm{flat} + \frac1\nu [r_t, \cdot].
 \end{equation}
(Note that this $r_t$ is different from $r_s$ in the previous section.) For each $t$, we built in the previous section
$$ A_t : (\Gamma(W)_{D_t})|_U \to \Gamma(W|_U)_{D_\mathrm{flat}}$$
such that 
$$ A_t(a) = V_t\circ a \circ V_t^{-1}$$
and
\begin{equation}\label{V_t}
V_t^{-1}\circ D_\mathrm{flat} V_t - \frac1\nu r_t = - \frac1\nu\alpha_t
\end{equation}
for some $V_t \in \Gamma(W)$ (see \eqref{V})  where $\Omega_t = d\alpha_t$ on $U$.
By \eqref{tr} the normalized trace can be expressed as
\begin{equation}\label{tr2}
\Tr^{\ast_{\nabla^t,\Omega_t}}(F) = (2\pi\nu)^{-m}\int_M \left(V_t\circ Q_t(F)\circ V_t^{-1}\right)|_{y=0} \frac{\omega^m}{m!}.
\end{equation}
Hereafter we omit the notation $\circ$. To compute the derivative of \eqref{tr2} with respect to $t$  we see
\begin{equation}\label{tr3}
\frac{d}{dt} (V_t Q_t(F) V_t^{-1}) = V_t([V_t^{-1}\dot{V}_t,Q_t(F)] + \dot{Q}_t(F))V_t^{-1}.
\end{equation}
We first treat $[V_t^{-1}\dot{V}_t,Q_t(F)] $. Taking the derivative of \eqref{V_t} with respect to $t$ we obtain
\begin{equation}\label{tr4}
[V_t^{-1}D_\mathrm{flat}V_t, V_t^{-1}\dot{V}_t] + D_\mathrm{flat}(V_t^{-1}\dot{V}_t) = \frac1\nu(\dot{r}_t - \dot{\alpha}_t).
\end{equation}
Using \eqref{V_t}, the fact that $\alpha_t$ is central and \eqref{D_t} we obtain from \eqref{tr4}
\begin{equation}\label{tr5}
D_t(V_t^{-1}\dot{V}_t) = \frac1\nu(\dot{r}_t - \dot{\alpha}_t).
\end{equation}
On the other hand, from
$$ D_t^2 = (D_\mathrm{flat} + \frac1\nu[r_t,\cdot])^2$$
we have
$$  \Omega_t = D_\mathrm{flat} r_t + \frac1\nu r_t\circ r_t$$
and thus
\begin{eqnarray}\label{solvable}
D_t(\dot{r}_t - \dot\alpha_t) &=& D_t\dot{r}_t - \dot{\Omega}_t \nonumber\\
&=& D_\mathrm{flat}\dot{r}_t + \frac1\nu[r_t,\dot{r}_t] - \dot\Omega_t = 0
\end{eqnarray}
Thus by Lemma \ref{D_inverse}, \eqref{tr5} and \eqref{solvable} it follows that 
\begin{equation*}\label{solvable2}
V_t^{-1}\dot{V}_t = D_t^{-1} (\dot{r}_t - \dot\alpha_t) + b
\end{equation*}
for some $b \in \Gamma(W)_{D_t}$. Note that this $b$ is necessary because for uniqueness we have 
to impose $V_t^{-1}\dot{V}_t|_{y=0} = 0$. 
Hence \eqref{tr3} has become
\begin{equation}\label{tr6}
\frac{d}{dt} (V_t Q_t(F) V_t^{-1}) = V_t\left(\frac1\nu[ D_t^{-1} (\dot{r}_t - \dot\alpha_t) + b,Q_t(F)] + \dot{Q}_t(F)\right)V_t^{-1}.
\end{equation}
Now we treat $\dot{Q}_t(F)$. Taking the derivative of $D_tQ_t(F) = 0$ we obtain
\begin{eqnarray*}
D_t\dot{Q}_t(F) = -\frac1\nu[\dot{r}_t - \dot{\alpha}_t,Q_t(F)]
\end{eqnarray*}
since $\dot{\alpha}_t$ is central. Using \eqref{solvable} again we have
$$ D_t[\dot{r}_t - \dot{\alpha}_t,Q_t(F)] = 0.$$
Since $Q_t(F)|_{y=0} = F$ we also have the uniqueness condition $\dot{Q}_t(F)|_{y=0} = 0$.
Thus by Lemma \ref{D_inverse}
$$ \dot{Q}_t(F) = - \frac1\nu D_t^{-1} [\dot{r}_t - \dot{\alpha}_t,Q_t(F)].$$
Now \eqref{tr6} has become
\begin{equation*}\label{tr7}
\frac{d}{dt} (V_t Q_t(F) V_t^{-1}) = V_t\left(\frac1\nu[ D_t^{-1} (\dot{r}_t - \dot\alpha_t) + b,Q_t(F)] -  \frac1\nu D_t^{-1} [\dot{r}_t - \dot{\alpha}_t,Q_t(F)]\right)V_t^{-1}.
\end{equation*}
Thus we obtain
\begin{eqnarray*}
\frac{d}{dt} \Tr^{\ast_{\nabla^t,\Omega_t}}(F) &=& \int_M V_t\left(\frac1\nu[ D_t^{-1} (\dot{r}_t - \dot\alpha_t) + b,Q_t(F)] -  \frac1\nu D_t^{-1} [\dot{r}_t - \dot{\alpha}_t,Q_t(F)]\right)V_t^{-1}|_{y=0} \frac{\omega^m}{m!}\\
&=& \Tr^{\ast_{\nabla^t,\Omega_t}}\left(\frac1\nu[ D_t^{-1} (\dot{r}_t - \dot\alpha_t) + b,Q_t(F)] -  \frac1\nu D_t^{-1} [\dot{r}_t - \dot{\alpha}_t,Q_t(F)]\right)|_{y=0}).
\end{eqnarray*}
Recall by \eqref{D_inverse2} that 
$$ D_t^{-1}|_{y=0} = -Q(\delta^{-1}\cdot)|_{y=0} = 0$$
since $\delta^{-1}$ increases $y$-degree by $1$. Thus 
\begin{equation}\label{final1}
\frac{d}{dt} \Tr^{\ast_{\nabla^t,\Omega_t}}(F) 
= \Tr^{\ast_{\nabla^t,\Omega_t}}\left(\frac1\nu[ D_t^{-1} (\dot{r}_t - \dot\alpha_t) + b,Q_t(F)]|_{y=0}\right).
\end{equation}
Also $b = Q_t(b_0)$ for some $b_0 \in C^\infty(M)$, and by the property of the trace we have
$$ \Tr^{\ast_{\nabla^t,\Omega_t}}([b,Q_t(F)]|_{y=0}) = \Tr^{\ast_{\nabla^t,\Omega_t}}([b_0,F]_{\ast_{\nabla^t,\Omega_t}}) = 0.$$
Thus \eqref{final1} becomes
\begin{equation*}\label{final2}
\frac{d}{dt} \Tr^{\ast_{\nabla^t,\Omega_t}}(F) 
= \Tr^{\ast_{\nabla^t,\Omega_t}}\left(\frac1\nu[ D_t^{-1} (\dot{r}_t - \dot\alpha_t),Q_t(F)]|_{y=0}\right).
\end{equation*}
Recall also \eqref{D_inverse3} so that 
$$ D_t^{-1}\dot{r}_t = -Q(\delta^{-1}\dot{r}_t).$$
Our $\dot{r}_t$ comes from the variation $\dot{\bar{\Gamma}}$ of the symplectic connection and the variation of the $r$-term in
Fedosov's construction in Theorem \ref{FedThm}. But the $r$-term in Fedosov's construction is required
$\delta^{-1}r = 0$. Hence we have
$$ D_t^{-1}\dot{r}_t = D_t^{-1}\dot{\bar\Gamma}.$$
\noindent
The Theorem \ref{variation1} follows by noting $\dot{\alpha} = \dot{\beta}$. This completes the proof.
\end{proof}

\section{Quantum moment map}

The formula in the next proposition can be found in page 135 in \cite{gr3}, but we will re-produce its proof
as we wish to make clear how the assumptions are used.
\begin{proposition}[\cite{gr3}]\label{Lie der}
For any triple $(\omega, \Omega, \nabla) \in \mathcal C^G([\omega_0],[\Omega_0])$ and $X \in \mathfrak g$
we have the identity
$$ L_X = D\circ i(X) + i(X)\circ D + \frac1\nu \ad_\ast(Q(\mu_X)).$$
\end{proposition}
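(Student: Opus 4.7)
The plan is to read the identity as an equality of graded derivations of degree $0$ on the algebra $(\Gamma(W)\otimes\wedge(M),\circ)$ and to match them on a generating set. First I would check that all three operators on the right are derivations of the appropriate form degree: $D$ is a degree-$+1$ graded derivation of $\circ$ and $i(X)$ is a degree-$-1$ graded derivation, so their graded anticommutator $D\circ i(X)+i(X)\circ D$ is a degree-$0$ derivation; the operator $\frac1\nu\ad_\ast(Q(\mu_X))=\frac1\nu[Q(\mu_X),\cdot]_\circ$ is manifestly an inner derivation since $Q(\mu_X)\in\Gamma(W)$ is a $0$-form (the notation $\ad_\ast$ matching $\ad_\circ$ via the algebra isomorphism $Q:(C^\infty(M)[[\nu]],\ast_{\nabla,\Omega})\to(\Gamma(W)_D,\circ)$); and $L_X$ is a derivation by naturality. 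Define
$$\Phi:=L_X-D\circ i(X)-i(X)\circ D-\frac1\nu[Q(\mu_X),\cdot]_\circ,$$
a degree-$0$ derivation of $\circ$ that I want to show vanishes.

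Second, I verify that $\Phi$ commutes with $D$. The three ingredients are: $[L_X,D]=0$, which follows from the $G$-invariance of $\omega$, $\Omega$ and $\nabla$, forcing $L_X$ to annihilate the local data $\omega_{ij}y^idx^j$, $\overline\Gamma$ and $r$ that enter $D$; $D^2=0$ by Fedosov flatness (Theorem~\ref{FedThm}), which forces $D$ to commute with $D\circ i(X)+i(X)\circ D$; and $DQ(\mu_X)=0$, which gives $[D,\frac1\nu[Q(\mu_X),\cdot]_\circ]=\frac1\nu[DQ(\mu_X),\cdot]_\circ=0$. In particular $\Phi$ preserves the subspace $\Gamma(W)_D$ of flat sections.

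Third, I evaluate $\Phi$ on flat sections and on central forms. On $a=Q(F)$ for $F\in C^\infty(M)[[\nu]]$ we have $i(X)a=0$ and $Da=0$, leaving $\Phi(Q(F))=L_XQ(F)-\frac1\nu[Q(\mu_X),Q(F)]_\circ$. $G$-equivariance of $Q$, which follows from the $G$-invariance of the Fedosov data together with the uniqueness in the recursion \eqref{Q}, yields $L_XQ(F)=Q(X(F))$. The quantum moment map equation \eqref{qmoment}, together with the fact that $Q$ intertwines $\circ$ and $\ast_{\nabla,\Omega}$, gives
$$\frac1\nu[Q(\mu_X),Q(F)]_\circ=Q\bigl(\tfrac1\nu[\mu_X,F]_{\ast_{\nabla,\Omega}}\bigr)=Q(X(F)),$$
so $\Phi(Q(F))=0$. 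On a central form $\alpha\in\wedge(M)$, the commutators $[\overline\Gamma+r,\alpha]_\circ$ and $[Q(\mu_X),\alpha]_\circ$ vanish by centrality and $\delta\alpha=0$, so $D\alpha=d\alpha$ and the classical Cartan formula gives $\Phi(\alpha)=L_X\alpha-(di(X)+i(X)d)\alpha=0$.

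Finally, since $\Phi$ is a derivation of $\circ$, it automatically vanishes on the subalgebra generated by flat sections and central forms. In a Darboux chart, Fedosov's recursion \eqref{Q} yields $Q(x^i)=x^i+y^i+(\text{higher Weyl degree})$, so $y^i$ sits in this subalgebra modulo higher-Weyl-degree corrections; a Weyl-degree induction using the derivation property of $\Phi$ then forces $\Phi\equiv0$ on all of $\Gamma(W)\otimes\wedge(M)$. I expect the main technical obstacle to lie in this last closure step: the higher-Weyl-degree pieces of $Q(\mu_X)$ must conspire with the $r$-correction in $D$ and the contraction $i(X)\overline\Gamma$ so that the derivation property propagates the vanishing with no residue. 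That cancellation is exactly the content of $d\mu_X=i(X)(\omega-\Omega)$ from \eqref{qmoment3}, lifted to the Weyl algebra bundle via the Fedosov machinery.
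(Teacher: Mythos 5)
Your argument is correct in outline, but it takes a genuinely different route from the paper and outsources the main content to a citation. The paper starts from the Gutt--Rawnsley Cartan-type formula $L_X = D\circ i(X)+i(X)\circ D+\frac1\nu\ad_\ast(T(X))$, valid for any symplectic vector field, with the explicit term $T(X)=-i(X)r+\omega_{ij}X^iy^j+\frac12\nabla_i(i(X)\omega)_jy^iy^j$, and then verifies by direct computation that $D(\mu_X+T(X))=0$ --- using $L_Xr=0$, the identity $\nabla_k\nabla_iX^p=-\mathrm{R}^p{}_{i\ell k}X^\ell$ coming from $L_X\nabla=0$, and the defining equation $d\mu_X=i(X)(\omega-\Omega)$ --- so that $\mu_X+T(X)=Q(\mu_X)$ and $\ad_\ast(T(X))=\ad_\ast(Q(\mu_X))$ since $\mu_X$ is central. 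You instead run a soft argument: $\Phi$ is a derivation vanishing on flat sections and central forms, which generate densely. Two caveats. First, your vanishing on flat sections rests entirely on \eqref{qmoment}, which is precisely the restriction of the proposition to $\Gamma(W)_D$; this is admissible here only because the paper grants the equivalence of \eqref{qmoment} with \eqref{qmoment3} by citing Gutt--Rawnsley and M\"uller--Neumaier, but it means your proof never actually engages the hypotheses ($G$-invariance of $\nabla$, the equation \eqref{qmoment3}) whose role the authors reprove the proposition in order to expose --- the hard step is hidden in the citation, and your closing remark that \eqref{qmoment3} enters "in the closure step" is misplaced: it enters only through \eqref{qmoment}. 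Second, the closure step should be stated more carefully: the subalgebra generated by $\Gamma(W)_D$ and the central forms is only dense with respect to the Weyl filtration, so you must add that $\Phi$ lowers Weyl degree by at most one (the degree-zero part of $Q(\mu_X)$ is central, so the worst contribution of $\frac1\nu\ad_\ast(Q(\mu_X))$ comes from its degree-one part, and $-\delta$ inside $D$ also drops the degree by one) and that the filtration is complete and Hausdorff, whence $\Phi(a)\in\bigcap_N W_{\ge N}=0$. With those points supplied your proof closes; what the paper's route buys in exchange for the explicit computation is self-containedness and the useful byproduct $Q(\mu_X)=\mu_X+T(X)$.
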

\begin{proof}
We start with the general formula in page 135, \cite{gr3}, 
for the Fedosov star product and a symplectic vector field $X$ i.e. $di(X)\omega = 0$
\begin{equation}\label{QM0}
 L_X = D\circ i(X) + i(X)\circ D + \frac1\nu \ad_\ast(T(X))
 \end{equation}
where
$$ T(X) = - i(X)r + \omega_{ij}X^iy^j + \frac12(\nabla_i(i(X)\omega)_jy^iy^j),$$
but note that the sign of $r$ in \cite{gr3} is opposite from ours.
To prove the proposition it is sufficient to show
\begin{equation}\label{QM1}
D(\mu_X + T(X)) = 0.
\end{equation}
First of all, since $L_X\nabla=0$ and $L_X\omega = L_X\Omega = 0$ we have $L_X r = 0$. Thus by \eqref{QM0} we have
\begin{equation}\label{QM2}
 -D i(X)r =  i(X)Dr + \frac1\nu [T(X), r].
 \end{equation}
 From Theorem \ref{FedThm} and \eqref{r} we see
 \begin{equation}\label{QM3}
Dr = -\overline{R} + \Omega +\frac1{2\nu}[r,r].
\end{equation}
From \eqref{QM2} and \eqref{QM3} we obtain
 \begin{equation}\label{QM4}
-D i(X)r = -i(X) \overline{R} + i(X)\Omega -\frac1{\nu}[\omega_{ij}X^iy^j + \frac12(\nabla_i(i(X)\omega)_jy^iy^j),r].
\end{equation}
Secondly, using \eqref{r} we obtain
 \begin{equation}\label{QM5}
D(\omega_{ij}X^iy^j) = -i(X)\omega + \partial (\omega_{ij}X^iy^j) + \frac1{\nu}[\omega_{ij}X^iy^j,r].
\end{equation}
Thirdly, using \eqref{r} again we have
 \begin{eqnarray}\label{QM6}
&&D(\frac12(\nabla_i(i(X)\omega)_jy^iy^j)) \\
&&= -\nabla_i(i(X)\omega)_j dx^i y^j + \partial (\frac12(\nabla_i(i(X)\omega)_jy^iy^j)) + \frac1{\nu}[\frac12(\nabla_i(i(X)\omega)_jy^iy^j),r].\nonumber
\end{eqnarray}
The condition that $\nabla$ is $G$-invariant implies $L_X\nabla = 0$, which is equivalent to say
\begin{equation*}
(\nabla^2X)(Y,Z) =( -\nabla_X\nabla_Y  + \nabla_Y\nabla_X + \nabla_{[X,Y]})Z,
\end{equation*}
or equivalently
\begin{equation*}
\nabla_k\nabla_iX^p = - \mathrm{R}^p{}_{i\ell k}X^\ell
\end{equation*}
where $\ell,\ k$ are regarded as indices of the form part and $p,\ i$ are regarded as the indices of the endomorphism part. 

From this and \eqref{symp-form} we obtain
\begin{eqnarray*}
\partial (\frac12(\nabla_i(i(X)\omega)_jy^iy^j))&=&\frac14 \nabla_k\nabla_iX^p\omega_{pj} y^iy^j dx^k\nonumber\\
&=& -\frac14  \mathrm{R}^p{}_{i\ell k}X^\ell \omega_{pj}y^iy^j dx^k\nonumber\\
&=& \frac14  \omega_{jp}\mathrm{R}^p{}_{i\ell k} y^iy^j X^\ell dx^k\nonumber\\
&=& i(X)\overline{R}.
\end{eqnarray*}
Thus \eqref{QM6} becomes
 \begin{eqnarray}\label{QM7}
&&D(\frac12(\nabla_i(i(X)\omega)_jy^iy^j)) \\
&&= -\nabla_i(i(X)\omega)_j dx^i y^j + i(X)\overline{R} + \frac1{\nu}[\frac12(\nabla_i(i(X)\omega)_jy^iy^j),r].\nonumber
\end{eqnarray}
Adding \eqref{QM4}, \eqref{QM5} and \eqref{QM7} we obtain
 \begin{eqnarray*}
DT &=& i(X)(-\omega + \Omega) \\
&=& -d\mu_X = -D\mu_X.
\end{eqnarray*}
This shows \eqref{QM1} completing the proof of Proposition \ref{Lie der}.
\end{proof}

\begin{proposition}\label{norma_var} The following two hold about normalization.\\
\begin{enumerate}
\item[(a)] For a quantum Hamiltonian vector field $X$, the quantum Hamiltonian function is determined 
uniquely under the normalization condition \eqref{normalize_c}.
\item[(b)] Let $(M, \omega_t,\Omega_t)$ be quantum Hamiltonian $G$-spaces, for $t\in I$ such that 
\begin{equation}\label{tau}
 \omega_t - \Omega_t = \omega_0 - \Omega_0 + d\tau_t
 \end{equation}
for a smooth family of $G$-invariant formal $1$-form $\tau_t$. Then the normalized quantum Hamiltonian functions $u_{X,t}$ for $\omega_t - \Omega_t$ with normalization condition \eqref{normalize_c} are related by
\begin{equation*}
u_{X,t} = u_{X,0} - \tau_t(X).
\end{equation*}
\end{enumerate}
\end{proposition}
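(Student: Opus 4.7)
For part (a), the plan is to observe that if $\mu_X$ and $\mu'_X$ are two quantum Hamiltonian functions for the same $X$, then $d(\mu_X - \mu'_X) = 0$, so on a connected $M$ their difference is a formal constant $c \in \bfR[[\nu]]$. Substituting into \eqref{normalize_c} for both yields $c \cdot \int_M (\omega - \Omega)^m = 0$. The leading coefficient of $\int_M (\omega - \Omega)^m$ is $\int_M \omega^m > 0$, so this formal series is a unit in $\bfR[[\nu]]$, forcing $c = 0$.

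For part (b), I will differentiate the defining relation $du_{X,t} = i(X)(\omega_t - \Omega_t)$ in $t$ and use \eqref{tau} to obtain $d\dot u_{X,t} = i(X)\, d\dot\tau_t$. Since $\tau_t$ is $G$-invariant, $L_X \dot\tau_t = 0$, and Cartan's formula then gives $i(X)\, d\dot\tau_t = -d(\dot\tau_t(X))$. Hence $\dot u_{X,t} + \dot\tau_t(X)$ is $d$-closed, and so (on connected $M$) a formal constant $c_t \in \bfR[[\nu]]$.

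To pin down $c_t = 0$, I will differentiate the normalization \eqref{normalize_c} in $t$, obtaining
\[
0 = \int_M \dot u_{X,t}(\omega_t - \Omega_t)^m + m \int_M u_{X,t}\, (\omega_t - \Omega_t)^{m-1} \wedge d\dot\tau_t.
\]
Integrating by parts on the closed manifold $M$ in the second term and using $du_{X,t} = i(X)(\omega_t - \Omega_t)$, together with the identity $i(X)\eta \wedge \alpha = -\alpha(X)\eta$ valid for a top form $\eta$ and a $1$-form $\alpha$ (which follows from $i(X)(\eta \wedge \alpha) = 0$ and the graded Leibniz rule for $i(X)$), converts that integral into $\int_M \dot\tau_t(X)(\omega_t - \Omega_t)^m$. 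Thus $c_t \int_M (\omega_t - \Omega_t)^m = 0$, and the same unit-in-$\bfR[[\nu]]$ argument as in (a) yields $c_t = 0$. Integrating $\dot u_{X,t} = -\dot\tau_t(X)$ from $0$ to $t$ and using that $\tau_t$ may be shifted by the closed $1$-form $\tau_0$ (consistent with $d\tau_0 = 0$ from \eqref{tau} at $t=0$, and harmless for \eqref{tau}) to arrange $\tau_0 = 0$ produces the stated formula $u_{X,t} = u_{X,0} - \tau_t(X)$.

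The main obstacle is the sign-bookkeeping in the integration-by-parts step, specifically the identity $i(X)\eta^m \wedge \alpha = -\alpha(X)\,\eta^m$ for the top form $\eta^m = (\omega_t - \Omega_t)^m$; once this is in place, the rest of the argument is a direct adaptation of the classical Duistermaat--Heckman-style manipulation to the formal power series setting, with the normalization uniqueness from (a) serving as the final step that kills $c_t$.
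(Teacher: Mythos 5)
Your proof is correct and follows essentially the same route as the paper's: the heart of both arguments is the Cartan-formula/integration-by-parts identity showing that $\frac{d}{dt}\int_M u_{X,t}\,(\omega_t-\Omega_t)^m$ picks up exactly the compensating term $\int_M \dot\tau_t(X)\,(\omega_t-\Omega_t)^m$, so the normalization is preserved along the path. The only (harmless) difference is direction --- the paper verifies the explicit candidate $u_{X,0}-\tau_t(X)$ directly, whereas you derive the same formula by differentiating the normalized solution and killing the constant $c_t$ --- and your explicit remarks that $\int_M(\omega-\Omega)^m$ is a unit in $\bfR[[\nu]]$ and that one may arrange $\tau_0=0$ supply details the paper leaves implicit.
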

\begin{proof} The statement of item (a) is obvious because if we have two quantum Hamiltonian functions of the same vector field $X$ 
then the difference of the two is a formal constant.

To show (b) one can see $i(X)(\omega_t - \Omega_t) = du_{X,t}$ and that $u_{X,t}$ is independent of the choice of
the $G$-invariant $1$-form $\tau_t$ satisfying \eqref{tau}
since another $\tau'_t$ satisfying \eqref{tau} is of the form $\tau'_t = \tau_t + dh_t$ for a $G$-invariant smooth function $h_t$. One further sees
\begin{eqnarray*}
\frac{d}{dt} \int_M u_{X,t}(\omega_t - \Omega_t)^m &=& - \int_M \dot{\tau}(X) (\omega_t - \Omega_t)^m
+ \int_M m u_{X,t} d\dot{\tau} \wedge (\omega_t - \Omega_t)^{m-1}\\
&=& -\int_M \dot{\tau} \wedge i(X) (\omega_t - \Omega_t)^m - \int_M m du_{X,t} \wedge \dot{\tau} \wedge (\omega_t - \Omega_t)^{m-1}\\
&=& 0.
\end{eqnarray*}
Thus if $u_{X,0}$ satisfies normalization \eqref{normalize_c} then so does $u_{X,t}$ for all $t$. This proves (b). 
\end{proof}

\begin{proof}[Proof of Theorem \ref{main thm}.]
As Step 1, we consider the case when we have $(\omega_0, \Omega_0, \nabla^0)$ and 
$(\omega_0, \Omega, \nabla) \in \mathcal C^G([\omega_0],[\Omega_0])$. 
We take a family $(\omega_0, \Omega_t, \nabla^t)$ in 
$\mathcal C^G([\omega_0],[\Omega_0])$ joining $(\omega_0, \Omega_0, \nabla^0)$ and $(\omega_0, \Omega, \nabla)$. 
We put
$$\Omega_t = \Omega_0 + d\beta_t$$
for a $G$-invariant formal $1$-form $\beta_t$. Then by Proposition \ref{norma_var}, the quantum Hamiltonian function $\mu_{X,t}$ for $\omega_0 - \Omega_t$ with normalization \eqref{normalize_c} is given by
$$ \mu_{X,t} = \mu_X + \beta_t(X).$$
By Theorem \ref{variation1} we have
\begin{equation}\label{trace_variation}
\frac{d}{dt} \Tr^{\ast_{\nabla^t,\Omega_t}}(\mu_{X,t}) = (2\pi\nu)^{-m}\int_M \left(\frac1\nu [D_t^{-1}(\dot{\bar\Gamma} - \dot{\beta}), Q_t(\mu_{X,t})]|_{y=0} + \dot{\beta}(X)\right)
\rho^{\nabla^t,\Omega_t} \frac{\omega_0^m}{m!}.
\end{equation}
By Proposition \ref{Lie der} we have
$$ \frac1\nu [D_t^{-1}(\dot{\bar\Gamma} - \dot{\beta}), Q_t(\mu_{X,t})]|_{y=0} 
= (-L_X + D_t\circ i(X) + i(X)D_t) D_t^{-1}(\dot{\bar\Gamma} - \dot{\beta})|_{y=0} .$$
But $D_t^{-1}(\dot{\bar\Gamma} - \dot{\beta})$ is a $0$-form, i.e. a function, so that
$$ i(X) D_t^{-1}(\dot{\bar\Gamma} - \dot{\beta}) = 0.$$
Further, recall $D_t^{-1} = - Q_t\circ\delta^{-1}$ by \eqref{D_inverse3} and $\delta^{-1}$ increases $y$-degree by $1$ so that
$$ L_X D_t^{-1}(\dot{\bar\Gamma} - \dot{\beta})|_{y=0} = 0.$$
The remaining term becomes 
\begin{eqnarray*}
i(X)D_t D_t^{-1}(\dot{\bar\Gamma} - \dot{\beta})|_{y=0} &=& i(X) (\dot{\bar\Gamma} - \dot{\beta})|_{y=0} \\
&=& - \dot{\beta}(X)
\end{eqnarray*}
since $\dot{\bar\Gamma}$ has $y$-degree $2$. Thus the right hand side of \eqref{trace_variation} vanishes.
Thus 
$$\Tr^{\ast_{\nabla,\Omega}}(\mu_{X}) = \Tr^{\ast_{\nabla^0,\Omega_0}}(\mu_{X,0})$$
for fixed $\omega = \omega_0$. 
This completes the proof of Theorem \ref{main thm} in the case when $\omega$ is fixed to be $\omega_0$.

As Step 2, we consider the case when we have $(\omega_0, \Omega_0, \nabla^0)$ and $(\omega, \Omega, \nabla) \in \mathcal C^G([\omega_0],[\Omega_0])$.
Then 
there is a smooth path $\{\omega_s\}_{0 \le s \le 1}$ consisting of $G$-invariant
symplectic forms joining $\omega_0$ and $\omega_1 = \omega$ in the cohomology class $[\omega_0]$
Then we have $G$-equivariant 
diffeomorphisms $f_s : M \to M$ such that $f^\ast_s\omega_s = \omega_0$ by Moser's theorem. 
We put $f:= f_1$ for notational convenience. Then we have 
$$ f^\ast(\omega, \Omega, \nabla) = (\omega_0, f^\ast\Omega, f^\ast\nabla)$$
with $f^\ast\Omega$ cohomologous to $\Omega_0$
and $f^\ast\nabla$ being a symplectic connection for $f^\ast\omega = \omega_0$. 
Then we are in a position where the same arguments as in Step 1 apply for the pair $(\omega_0, \Omega_0, \nabla^0)$
and $(\omega_0, f^\ast\Omega, f^\ast\nabla)$. 
We obtain from Step 1
$$
\Tr^{\ast_{\omega_0,\nabla^0,\Omega_0}}(\mu_{X,0}) = \Tr^{\ast_{\omega_0,f^\ast\nabla,f^\ast\Omega}}(f^\ast\mu_{X})
$$
where $\mu_{X,0}$ indicates the quantum moment map for $\omega_0 - \Omega_0$ and
where we indicated the symplectic forms with respect to which the star products are considered.
But since $f$ is a $G$-equivariant symplectomorphism the right hand side is equal to 
$$ 
\Tr^{\ast_{\omega_0,f^\ast\nabla,f^\ast\Omega}}(f^\ast\mu_{X})= \Tr^{\ast_{\omega,\nabla,\Omega}}(\mu_X).
$$
Thus $\Tr^{\ast_{\omega,\nabla,\Omega}}(\mu_X)$ is independent of $(\omega, \nabla, \Omega) \in \mathcal C^G([\omega_0],[\Omega_0])$
with the normalization condition \eqref{normalize_c} of $\mu_X$. 
This completes
the proof of Theorem \ref{main thm}.
\end{proof}

We now apply Theorem \ref{main thm} to the K\"ahler situation. Consider a K\"ahler manifold $(M,\omega_0,J)$ with 
$G$-invariant $\omega_0$ and $J$. For $\omega \in \calM^G_{[\omega_0]}$  we take the 
Levi-Civita connection $\nabla$. Then $(\omega,0,\nabla)$ is in $\mathcal C^G([\omega_0],0)$ and
the quantum Hamiltonian $G$-space $(M, \omega, 0)$ has quantum moment map $\mu_{\cdot}$ normalized by \eqref{normalize_c}, 
which means that for $X \in \mathfrak{g}$ the normalization gives 
$$\int_M \mu_X\, \omega^m=0$$
since we are taking $\Omega = 0$.

\begin{proposition} \label{prop:quantumkahler} Under the above situation the following two hold.
\begin{enumerate}
\item $\mu_X-\frac{\nu\,k}{2}\Delta^{(\omega)} \mu_X$ is a quantum-Hamiltonian with respect to the star product $*_{\nabla,\Omega_{k}(\omega)}$,
with $\Delta^{(\omega)}$ being the Laplacian with respect to $(\omega,J)$. In particular, $(\omega,\Omega_k(\omega),\nabla)\in \mathcal C^G([\omega_0],[\Omega_k(\omega_0)])$.
\item The integral 
\begin{equation}\label{eq:normalization}
\int_M \left(\mu_X-\frac{\nu\,k}{2}\Delta^{(\omega)} \mu_X\right)\, \left(\omega-\Omega_k(\omega)\right)^m
\end{equation}
is independent of the choice of $\omega \in \calM^G_{[\omega_0]}$.
\end{enumerate}
\end{proposition}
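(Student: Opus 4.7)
For part (1), my plan is a direct verification: since $d\mu_X = i(X)\omega$ by hypothesis on $\mu_X$, the desired identity
\begin{equation*}
d\Bigl(\mu_X - \tfrac{\nu k}{2}\Delta^{(\omega)}\mu_X\Bigr) = i(X)\omega - \nu k\,i(X)\Ric(\omega)
\end{equation*}
reduces to the classical K\"ahler identity $\tfrac12\,d(\Delta^{(\omega)}\mu_X) = i(X)\Ric(\omega)$ valid for any real holomorphic Killing field $X$ with $\omega$-moment map $\mu_X$. Both sides are closed 1-forms (the right-hand side since $L_X\Ric(\omega)=0$), and the equality follows from the fact that $\nabla^{1,0}\mu_X$ is a holomorphic vector field, via a short coordinate computation pairing the complex Hessian of $\mu_X$ with the Ricci tensor. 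Granted this, the membership $(\omega, \Omega_k(\omega),\nabla)\in \mathcal C^G([\omega_0], [\Omega_k(\omega_0)])$ is immediate: (a) is what we just proved; (b) follows from the convexity of $\calM^G_{[\omega_0]}$; (c) holds since $[\Ric(\omega)]$ represents $\pi c_1(M)$ by Chern--Weil and is therefore $\omega$-independent; (d) is automatic for Levi-Civita connections of $G$-invariant data.

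For part (2), I would argue by direct variation along a smooth path $\omega_t\in \calM^G_{[\omega_0]}$ with $\dot\omega_t = i\partial\bar\partial\phi_t$, $\phi_t$ being $G$-invariant, and show $\tfrac{d}{dt}\int_M\tilde\mu^k_{X,t}\alpha_t^m = 0$, where $\tilde\mu^k_{X,t} := \mu_{X,t} - \tfrac{\nu k}{2}\Delta_t\mu_{X,t}$ and $\alpha_t := \omega_t - \Omega_k(\omega_t)$. The classical variation $\dot\Ric(\omega_t) = \tfrac12 i\partial\bar\partial(\Delta_t\phi_t)$ gives $\dot\alpha_t = i\partial\bar\partial\Phi_t = d\tau_t$ where, strikingly, $\Phi_t := \phi_t - \tfrac{\nu k}{2}\Delta_t\phi_t$ is the \emph{same} combination of $\phi_t$ and $\Delta_t\phi_t$ as $\tilde\mu^k_{X,t}$ is of $\mu_{X,t}$ and $\Delta_t\mu_{X,t}$, and $\tau_t := \tfrac12 d^c\Phi_t$. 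Differentiating part (1) and using $G$-invariance of $\tau_t$ shows $\dot{\tilde\mu}^k_{X,t} = -\tau_t(X) + C_t$ for some formal constant $C_t\in\bfR[[\nu]]$. A Cartan-magic-formula manipulation parallel to the proof of Proposition \ref{norma_var}(b) then reduces the variation of the integral to
\begin{equation*}
\tfrac{d}{dt}\int_M \tilde\mu^k_{X,t}\,\alpha_t^m = C_t\int_M\alpha_t^m,
\end{equation*}
so the whole claim rests on $C_t = 0$.

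To establish $C_t = 0$ I would differentiate the normalization $\int_M\tilde\mu^k_{X,t}\omega_t^m = 0$. The K\"ahler identity $JX = \nabla\mu_X$ (so $\mathrm{div}(JX) = -\Delta\mu_X$) combined with integration by parts rewrites $\int_M\tau_t(X)\omega_t^m = -\tfrac12\int_M\Phi_t\Delta\mu_X\,\omega_t^m$, and similarly $m\int_M\tilde\mu^k_{X,t}\dot\omega_t\wedge\omega_t^{m-1} = \tfrac12\int_M\tilde\mu^k_{X,t}\Delta\phi_t\,\omega_t^m$. Substituting yields
\begin{equation*}
-C_t\cdot\Vol(\omega_t) = \tfrac12\int_M\bigl(\Phi_t\Delta\mu_X - \tilde\mu^k_{X,t}\Delta\phi_t\bigr)\omega_t^m.
\end{equation*}
Expanding both $\tfrac{\nu k}{2}\Delta$-corrections, the quadratic cross-terms $\Delta\mu_X\cdot\Delta\phi_t$ cancel pairwise, leaving $\int_M(\phi_t\Delta\mu_X - \mu_X\Delta\phi_t)\omega_t^m$, which vanishes by self-adjointness of $\Delta$. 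Hence $C_t = 0$. The main technical hurdle is consistent sign bookkeeping for the Laplacian, Ricci form, and the Hamiltonian relation so that the two $\tfrac{\nu k}{2}\Delta$-corrections appear symmetrically; with the right conventions the entire cancellation is driven by the parallel structure of $\tilde\mu^k_{X,t}$ and $\Phi_t$, which is itself a manifestation of the self-adjointness of $\Delta$.
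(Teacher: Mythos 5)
Your part (1) coincides with the paper's argument: the proof there consists precisely of the identity $i(X)\Ric(\omega)=d\bigl(\tfrac12\Delta^{(\omega)}\mu_X\bigr)$ for a Hamiltonian holomorphic Killing field, and the membership checks (a)--(d) are routine in both treatments. For part (2) you take a genuinely different route. The paper expands the integral \eqref{eq:normalization} in powers of $\nu$ and identifies the coefficient of $\nu^p$, via Lemma \ref{lemma:Futakic1}, with a multiple of the integral invariant $\mathfrak{F}_{(c_1)^p}$ of \cite{Fut04}; the independence of $\omega\in\calM^G_{[\omega_0]}$ is then imported wholesale from the known invariance of $\mathfrak{F}_q$. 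You instead prove invariance directly by a first-variation computation along a path in $\calM^G_{[\omega_0]}$, handling all orders of $\nu$ simultaneously: the observation that $\dot\alpha_t=dd^c\Phi_t$ with $\Phi_t$ the same $\bigl(\mathrm{id}-\tfrac{\nu k}{2}\Delta_t\bigr)$-image of the potential as $\tilde\mu^k_{X,t}$ is of $\mu_{X,t}$, together with the reduction of the variation to the single constant $C_t$ exactly as in Proposition \ref{norma_var}(b), is correct, and the final cancellation by self-adjointness of $\Delta$ does go through. Your argument is self-contained (no appeal to the invariance theorem of \cite{Fut04}); what the paper's route buys in exchange is the explicit identification of the invariant with the generalized Futaki invariants, which is then used in Section 5.

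One caveat on the last step: the two substitution identities as you display them, $\int_M\tau_t(X)\,\omega_t^m=-\tfrac12\int_M\Phi_t\Delta\mu_X\,\omega_t^m$ and $m\int_M\tilde\mu^k_{X,t}\,\dot\omega_t\wedge\omega_t^{m-1}=+\tfrac12\int_M\tilde\mu^k_{X,t}\Delta\phi_t\,\omega_t^m$, when inserted into $C_t\Vol=\int_M\tau_t(X)\,\omega_t^m-m\int_M\tilde\mu^k_{X,t}\,\dot\omega_t\wedge\omega_t^{m-1}$ produce the \emph{sum} $\Phi_t\Delta\mu_X+\tilde\mu^k_{X,t}\Delta\phi_t$, not the difference you go on to integrate, and the sum does not cancel. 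With one consistent set of conventions (say $\Delta=-\mathrm{div}\,\mathrm{grad}$, so that $\Lambda dd^c\psi=-\Delta\psi$, $\dot\Ric=\tfrac12 dd^c\Delta\psi$, and $\tau_t(X)=-\langle\nabla\Phi_t,\nabla\mu_X\rangle$) both boundary terms come out with signs that do yield the difference $\int_M(\Phi_t\Delta\mu_X-\tilde\mu^k_{X,t}\Delta\phi_t)\,\omega_t^m$, which vanishes by self-adjointness exactly as you claim. So the mechanism is right, but the intermediate signs as written are not mutually consistent and should be redone once with fixed conventions before this can stand as a proof.
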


Before going to the proof, let us recall a particular case of the construction from \cite{Fut04}. On a compact
K\"ahler manifold $(M,\omega,J)$, consider the holomorphic bundle $T^{(1,0)}M$ consisting of 
tangent vectors of type $(1,0)$. Choose any $(1,0)$-connection $\overline{\nabla}$ on $T^{(1,0)}M$ 
with curvature $R^{\overline{\nabla}}$. For $Z$ in $\mathfrak{h}$ the reduced Lie algebra of holomorphic vector fields, 
define $L(Z^{(1,0)}):=\overline{\nabla}_{Z^{(1,0)}}-\mathcal{L}_{Z^{(1,0)}}$, it is a $0$-form with values 
in $\mathrm{End}(T^{(1,0)}M)$. Let $q$ be a $\mathrm{Gl}(m,\bfC)$-invariant polynomial on $\mathfrak{gl}(m,\bfC)$ of 
degree $p$, the first author defined in \cite{Fut04}, the map $\mathfrak{F}_q:\mathfrak{h}\rightarrow \bfC$ by
\begin{equation} \label{eq:Futakigendef}
\mathfrak{F}_q(Z):=  \int_M   -(m-p+1)u_Z q(R^{\overline{\nabla}})\wedge \omega^{(m-p)}+ q(L(Z^{(1,0)})+R^{\overline{\nabla}})\wedge \omega^{(m-p+1)} ,
\end{equation}
where $u_Z=f+ih\in C^{\infty}_0(M,\bfC)$ for $Z=X_f+JX_h\in \mathfrak{h}$. Remark that as $L(Z^{(1,0)})+R^{\overline{\nabla}}$ is a form of mixed degree, the form $q(L(Z^{(1,0)})+R^{\overline{\nabla}})$ in the second term of $\mathfrak{F}_q$ is also of mixed degree but only the component of degree $2(p-1)$ will contribute to the integral. 

One shows $\mathfrak{F}_q$ depends neither on the choice of the ${(1,0)}$-connection nor on the choice of the K\"ahler form in $\calM_{[\omega]}$, see \cite{Fut04}. 

\begin{lemma} \label{lemma:Futakic1}
For the polynomial $q:=(c_1)^p$, $\overline{\nabla}=\nabla$ the Levi-Civita connection and $Z=X_{f}\in \mathfrak{h}$ with $f\in C^{\infty}_0(M)$, the invariant $\mathfrak{F}_q$ writes as:
\begin{equation*}
\mathfrak{F}_{(c_1)^p }(Z)= \left(\frac{1}{2\pi}\right)^p\int_M - (m-p+1)f\Ric(\omega)^p \wedge \omega^{(m-p)} - \frac{p}{2}\,\Delta^{(\omega)} f \Ric(\omega)^{(p-1)}\wedge \omega^{(m-p+1)}
\end{equation*}
\end{lemma}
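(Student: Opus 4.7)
The plan is a direct Chern--Weil unpacking of \eqref{eq:Futakigendef} for the specific polynomial $q=(c_1)^p$, using the standard identification of $c_1$ with $\frac{i}{2\pi}\tr$. The two summands of the integrand are handled separately.

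First I would treat the purely curvature term. For the Levi-Civita (Chern) connection $\nabla$ on $T^{(1,0)}M$, the Ricci form is related to the trace of the curvature by $i\tr R^{\nabla}=\Ric(\omega)$, so $c_1(R^{\nabla})=\frac{1}{2\pi}\Ric(\omega)$. Raising this identity to the $p$-th power gives $q(R^{\nabla})=\bigl(\frac{1}{2\pi}\bigr)^p\Ric(\omega)^p$, which, multiplied by $-(m-p+1)u_Z=-(m-p+1)f$ and wedged with $\omega^{m-p}$, reproduces the first summand in the lemma.

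For the mixed term, I use that $c_1$ is linear, so
\begin{equation*}
q\bigl(L(Z^{(1,0)})+R^{\nabla}\bigr)=\Bigl(c_1\bigl(L(Z^{(1,0)})\bigr)+c_1(R^{\nabla})\Bigr)^{p}
=\sum_{k=0}^{p}\binom{p}{k}c_1(L)^{p-k}c_1(R^{\nabla})^{k}.
\end{equation*}
Here $c_1(L)=\frac{i}{2\pi}\tr L(Z^{(1,0)})$ is a $0$-form and $c_1(R^{\nabla})$ is a $2$-form, so the only term of total degree $2(p-1)$ (the one that survives the wedge with $\omega^{m-p+1}$) is $k=p-1$, contributing
\begin{equation*}
p\,c_1(L)\,c_1(R^{\nabla})^{p-1}
=\Bigl(\tfrac{1}{2\pi}\Bigr)^{p}p\,\bigl(i\tr L(Z^{(1,0)})\bigr)\Ric(\omega)^{p-1}.
\end{equation*}

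The remaining, and genuinely non-formal, step is to identify $\tr L(X_f^{(1,0)})$ with (a multiple of) $\Delta^{(\omega)}f$. Writing $X_f^{(1,0)}=-i g^{k\bar\ell}(\partial_{\bar\ell}f)\partial_{k}$ from $i(X_f)\omega=df$, and using that $L(Z^{(1,0)})^{a}{}_{b}=\nabla_{b}(Z^{(1,0)})^{a}$ for any holomorphic $Z$, I compute in Kähler coordinates
\begin{equation*}
\tr L(X_f^{(1,0)})=\nabla_{k}\bigl(-ig^{k\bar\ell}\partial_{\bar\ell}f\bigr)=-i\,g^{k\bar\ell}\partial_{k}\partial_{\bar\ell}f=\tfrac{i}{2}\Delta^{(\omega)}f,
\end{equation*}
so that $i\tr L(X_f^{(1,0)})=-\tfrac{1}{2}\Delta^{(\omega)}f$. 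Substituting this into the displayed expression for the degree-$2(p-1)$ part and wedging with $\omega^{m-p+1}$ produces the second summand $-\frac{p}{2}\Delta^{(\omega)}f\,\Ric(\omega)^{p-1}\wedge\omega^{m-p+1}$ with the correct sign and normalization.

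The main obstacle is bookkeeping of sign and factor conventions: the factors $i$ in $c_1=\frac{i}{2\pi}\tr$, the sign in $\tr R^{\nabla}=-i\Ric(\omega)$, and the sign convention for $\Delta^{(\omega)}$ (the Hodge Laplacian, with $\Delta^{(\omega)}f=-2g^{k\bar\ell}\partial_{k}\partial_{\bar\ell}f$), must be fixed consistently so that the powers of $i$ collapse cleanly in $\bigl(\frac{i}{2\pi}\bigr)^{p}(-i)^{p-1}=\frac{i}{(2\pi)^{p}}$; once the conventions are pinned down, every step is a routine Chern--Weil calculation.
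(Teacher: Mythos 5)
Your proposal is correct and follows essentially the same route as the paper's proof: identify $c_1(R^{\nabla})=\frac{1}{2\pi}\Ric(\omega)$ for the first term, extract the unique degree-$2(p-1)$ component $p\,c_1(L)\,c_1(R^{\nabla})^{p-1}$ from the expansion of $q(L+R^{\nabla})$, and compute $c_1(L(X_f^{(1,0)}))=-\frac{1}{4\pi}\Delta^{(\omega)}f$. Your local-coordinate verification of the last identity is merely more explicit than the paper's one-line statement of it.
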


\begin{proof}
As $c_1(\cdot):=\frac{i}{2\pi}\tr^{\bfC}(\cdot)$ then $c_1(R^{\nabla})=\frac{1}{2\pi} \Ric(\omega)$ and as $u_Z=f$, the first term of the statement comes from the first term of the general formula \eqref{eq:Futakigendef}.
For the second term, we have
$$\int_M   (c_1)^p (L(Z^{(1,0)})+R^{\nabla})\wedge \omega^{(m-p+1)}= \int_M p\, c_1(L(Z^{(1,0)}))(c_1)^{p-1}(R^{\nabla})\wedge  \omega^{(m-p+1)}.$$
Now, $c_1(L(Z^{(1,0)}))=\frac{1}{2\pi}\tr^{\bfC}\left(Y^{(1,0)}\mapsto \nabla_{Y^{(1,0)}}X_f^{(1,0)}\right)=-\frac{1}{4\pi}\Delta^{\omega} f$.
\end{proof}

 \noindent Now, we can prove proposition \ref{prop:quantumkahler}.

\begin{proof}[Proof of Proposition \ref{prop:quantumkahler}]
\begin{enumerate}
\item It comes from $i_X \Ric(\omega)=d(\frac{1}{2}\Delta f)$ for $i_X\omega=df$ and $\mathcal{L}_X J=0$.
\item We compute the terms of order $\nu^p$ in the integral \eqref{eq:normalization}:
\begin{itemize}
\item at $p=0$, we have $\int_M \mu_X\, \omega^m=0$. Notice that this is the normalization \eqref{normalize_c} since $\Omega = 0$ for our quantum-Hamiltonian $G$-space $(M, \omega, 0)$.
\item at $p=1$, we have $-nk\int_M \mu_X \, \Ric(\omega)\wedge \omega^{(m-1)}$, which is the original Futaki invariant (the Laplacian does not contribute to the integral).
\item at $p>1$, we have
$$(-1)^p \frac{k^p}{m-p+1} {m\choose p}\int_M (m-p+1)\mu_X\Ric(\omega)^p \wedge \omega^{(m-p)} + \frac{p}{2}\,\Delta^{(\omega)} \mu_X \Ric(\omega)^{(p-1)}\wedge \omega^{(m-p+1)},$$
which is a K\"ahler invariant by Lemma \ref{lemma:Futakic1}
\end{itemize}
\end{enumerate}
\end{proof}

We set $\mu^k_{X}$ to be the quantum moment map with respect to $*_{\nabla,\Omega_k(\omega)}$ normalized by \eqref{normalize_c}, i.e.
\begin{equation*}
 \int_M \mu^k_{X} (\omega - \Omega_k(\omega))^m = 0,
\end{equation*}
and $\tilde{\mu}^k_{X}:= \mu_X-\frac{\nu\,k}{2}\Delta^{(\omega)} \mu_X$ which is normalized by \eqref{eq:normalizedkahler}, i.e. 
\begin{equation*}
 \int_M \tilde{\mu}^k_{X} \ \omega^m = 0.
\end{equation*}

\begin{proof}[Proof of Theorem \ref{main thm2}]
Let us compute for $X\in \mathfrak{g}$:
\begin{equation}
\Tr^{*_{\nabla,\Omega_k(\omega)}}(\tilde{\mu}^k_X)  = \Tr^{*_{\nabla,\Omega_k(\omega)}}(\mu^k_X)+ \Tr^{*_{\nabla,\Omega_k(\omega)}}(\tilde{\mu}^k_X-\mu^k_X)  .
\end{equation}
The first term of the right hand side is an invariant by Theorem \ref{main thm}. About the second term, note that 
$$\tilde{\mu}^k_X-\mu^k_X=\frac{1}{\int_M \left(\omega-\Omega_k(\omega)\right)^m}\int_M \left(\mu_X-\frac{\nu\,k}{2}\Delta^{(\omega)} \mu_X\right)\, \left(\omega-\Omega_k(\omega)\right)^m$$
which is an invariant by Proposition \ref{prop:quantumkahler}. Hence,
$$\Tr^{*_{\nabla,\Omega_k(\omega)}}(\tilde{\mu}^k_X-\mu^k_X)=
(\tilde{\mu}^k_X-\mu^k_X)\,\Tr^{*_{\nabla,\Omega_k(\omega)}}(1)$$
is an invariant since $\Tr^{*_{\nabla,\Omega_k(\omega)}}(1)$ is a topological invariant by the index theorem \cite{NT}.
This proves the first statement of Theorem \ref{main thm2}.
Moreover, if $*_{\nabla,\Omega_k(\omega)}$ is closed, then the trace density is $1$ and 
$$\Tr^{*_{\nabla,\Omega_k(\omega)}}(\tilde{\mu}^k_X)=(2\pi\nu)^{-m}\int_M\tilde{\mu}^k_X\,\omega^m=0.$$
This completes the proof of Theorem \ref{main thm2}.
\end{proof}

\section{Computations by hand up to $\nu^2$}

We are able to compute by hand the first order terms of the invariants $\Tr^{[\omega],[\Omega]}$ and $\Tr^{\calM^G_{[\omega]},k}$. To this end we compute the trace density up to order $\nu^2$.

\begin{proposition} \label{prop:densitynu2}
For the symplectic connection $\nabla$ and the formal $2$-form $\Omega:=\nu\alpha_1+\nu^2\alpha_2+O(\nu^3)$,
denote by $\rho^{\nabla,\Omega}:=\frac{1}{(2\pi\nu)^m}\left(1+\nu \rho_1+\nu^2 \rho_2+O(\nu^3)\right)$ the trace density of the Fedosov star product $*_{\nabla,\Omega}$. We have:
\begin{eqnarray*}
\rho_1 & := & -m\frac{\alpha_1\wedge\omega^{m-1}}{\omega^m} \\
\rho_2 & := & -\frac{1}{24} \mu(\nabla)-m\frac{\alpha_2\wedge\omega^{m-1}}{\omega^m} + \frac{1}{2}m(m-1)\dfrac{\alpha_1\wedge\alpha_1\wedge\omega^{m-2}}{\omega^m}
\end{eqnarray*}
for $\mu(\nabla)$ being the Cahen-Gutt momentum of $\nabla$.
\end{proposition}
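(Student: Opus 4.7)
The plan is to read off $\rho^{\nabla,\Omega}$ from the local form of the normalized trace \eqref{tr}. On a contractible Darboux chart $U$ and for a compactly supported test function $F \in C^\infty_c(U)$, one has
$$ \Tr^{\ast_{\nabla,\Omega}}(F) = (2\pi\nu)^{-m}\int_U (V\circ Q(F)\circ V^{-1})|_{y=0}\,\frac{\omega^m}{m!}. $$
The integrand $B(F) := (V\circ Q(F)\circ V^{-1})|_{y=0}$ is a formal power series of differential operators applied to $F$, and $\rho^{\nabla,\Omega}$ is identified, after integration by parts against the Liouville form, as the coefficient of $F$ in the formal adjoint of $B$ applied to $1$.

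The expansion of $B(F)$ up to order $\nu^2$ would proceed in three explicit steps. First, iterate the Fedosov recursion $r = \delta^{-1}(\barR - \Omega) + \delta^{-1}(\partial r + \tfrac{1}{\nu}\, r\circ r)$ from Theorem \ref{FedThm}; since $\delta^{-1}$ raises Weyl degree by $1$, the relevant terms in $r = r_3 + r_4 + r_5 + \ldots$ involve $\delta^{-1}\barR$, $\nu\delta^{-1}\alpha_1$, $\nu^2\delta^{-1}\alpha_2$, their $\partial$-derivatives, and the Weyl self-product $\tfrac{1}{\nu}\delta^{-1}(r_3\circ r_3)$. Second, expand $Q(F)$ via \eqref{Q} to the same Weyl degree. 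Third, solve the ODE in the proof of Proposition \ref{equivalence} for $V$, using a primitive $\alpha$ with $d\alpha = \Omega$ on $U$. Composition and evaluation at $y=0$ then yields $B(F)$ to the required order.

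The main obstacle is the purely $\nabla$-dependent term $-\tfrac{1}{24}\mu(\nabla)$: extracting the correct combination of $(\nabla^2 \Ric^\nabla)^{pq}$, $\Ric^\nabla_{pq}\Ric^{\nabla\,pq}$, and $\mathrm{R}^\nabla_{pqrs}\mathrm{R}^{\nabla\,pqrs}$ demands a careful bookkeeping of the repeated actions of $\delta^{-1}$ and $\partial$ on $\barR$ at Weyl degrees $4$ and $5$. This is precisely the Cahen-Gutt momentum calculation carried out in \cite{LLF}. By contrast, the $\Omega$-dependent contributions are essentially combinatorial: the linear terms come from the single insertion $-\nu^i\delta^{-1}\alpha_i$ into $r$, while the quadratic term $\tfrac{m(m-1)}{2}\alpha_1\wedge\alpha_1\wedge\omega^{m-2}/\omega^m$ collects both the $\tfrac{1}{\nu}\, r\circ r$ self-interaction in the second iteration of Fedosov's recursion and the dependence of $V$ on the primitive $\alpha$. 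Integration by parts against $\omega^m/m!$ then produces the combinatorial prefactors $m$ and $\binom{m}{2}$ stated in the proposition.
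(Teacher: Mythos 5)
Your strategy --- expanding $B(F) = (V\circ Q(F)\circ V^{-1})|_{y=0}$ on a Darboux chart via the recursion of Theorem \ref{FedThm}, the formula \eqref{Q} for $Q$, and the ODE for $V$, then reading off $\rho^{\nabla,\Omega}$ as the formal adjoint applied to $1$ --- is a legitimate route in principle, but what you have written is a plan rather than a proof. The entire content of the proposition is the three explicit coefficients $-m$, $-\tfrac{1}{24}$, and $\tfrac{1}{2}m(m-1)$, and none of them is actually derived: the linear terms are asserted to be ``essentially combinatorial,'' the quadratic term is said to ``collect'' contributions from $\tfrac1\nu r\circ r$ and from $V$ without those contributions being computed or even shown not to cancel, and the $-\tfrac{1}{24}\mu(\nabla)$ term is deferred wholesale to \cite{LLF}. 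In particular you do not address whether the conjugation by $V$ (which depends on a choice of local primitive $\alpha$ of $\Omega$) also contributes at linear order in $\alpha_1$, nor how the locally defined, $\alpha$-dependent intermediate expressions assemble into the globally defined, $\Omega$-dependent density claimed. Without at least one coefficient carried through to the end, there is no way to check that the bookkeeping you describe actually lands on the stated formulas.

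For comparison, the paper's proof avoids the chart-level computation of $V$ and $Q$ altogether. It quotes from \cite{bordemann} the explicit expansion of $f\ast_{\nabla,\Omega}g$ up to order $\nu^3$, extracts the antisymmetric part to get $[f,g]_{\ast_{\nabla,\Omega}} = \nu\{f,g\} - \nu^2\alpha_1(X_f,X_g) + \nu^3 C_3^-(f,g) + O(\nu^4)$, and then verifies that the proposed $\rho_1,\rho_2$ make $\int_M [f,g]_\ast\,\rho\,\omega^m/m!$ vanish order by order, using elementary wedge-product identities such as $-\int\alpha_1(X_f,X_g)\,\omega^m/m! = \int\{f,g\}\,\alpha_1\wedge\omega^{m-1}/(m-1)!$ together with the Cahen--Gutt moment map property $\int S^3_\nabla(f,g)\,\omega^m/m! = \int\{f,g\}\,\mu(\nabla)\,\omega^m/m!$ from \cite{cagutt}. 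That characterization of the trace density (as the function against which all star commutators integrate to zero, normalized to leading term $1$) is what lets each coefficient be checked by a one-line integration by parts; if you want to salvage your approach, you should either adopt this characterization or actually carry out the $V$, $Q$, $r$ expansion explicitly to order $\nu^2$.
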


\begin{proof}
Performing the Fedosov construction with symplectic connection $\nabla$ and the formal $2$-form $\Omega:=\nu\alpha_1+\nu^2\alpha_2+O(\nu^3)$, one obtains \cite{bordemann}:
\begin{equation*}
f*_{\nabla,\Omega}g  =  f.g+\frac{\nu}{2}\{f,g\}+\nu^2C_2(f,g) +\nu^3 C_3(f,g)+O(\nu^4)
\end{equation*}
with :
\begin{eqnarray*} 
C_2(f,g) & = &  \frac{1}{8}\Lambda^{i_1j_1} \Lambda^{i_2j_2} \nabla^2_{i_1i_2} f\nabla^2_{j_1j_2} g- \frac{1}{2} \alpha_1(X_f,X_g) \\
C_3(f,g) & = & \frac{1}{48}S^3_{\nabla}(f,g) +\frac{1}{2}(\imath_{X_f}\alpha_1)_i \Lambda^{ik} (\imath_{X_g}\alpha_1)_k  - \frac{1}{2}\alpha_2(X_f,X_g) + B^3_{\nabla}[\alpha_1](f,g)
\end{eqnarray*}
where 
\begin{equation}
S^3_{\nabla}(f,g)  =   \Lambda^{i_1j_1} \Lambda^{i_2j_2} \Lambda^{i_3j_3}L_{X_f}\nabla_{i_1i_2i_3}L_{X_g}\nabla_{j_1j_2j_3},
\end{equation}
for $L_{X_f}\nabla_{i_1i_2i_3}$ being the component of the Lie derivative of $\nabla$ seen as a symmetric $3$-tensor on $M$, and
\begin{eqnarray*}
B^3_{\nabla}[\alpha_1](f,g)&:=& \frac{1}{32} \left(\Lambda^{ta}(\alpha_1)_{au}\Lambda^{ui}\Lambda^{kj}+\Lambda^{ta}(\alpha_1)_{au}\Lambda^{uj}\Lambda^{ki}\right) \left(\nabla^2_{tk}f \nabla^2_{ij}g+\nabla^2_{tk}g \nabla^2_{ij}f\right) \nonumber\\
& & + \frac{1}{48} \left(\Lambda^{ui}\Lambda^{kj}+\Lambda^{uj} \Lambda^{ki}\right) \left((\imath_{X_f}\nabla_k \alpha_1)_u \nabla^2_{ij}g+\nabla^2_{ij}f(\imath_{X_g}\nabla_k \alpha_1)_u \right).
\end{eqnarray*}
Note that in \cite{bordemann} the conventions are slightly different from here: the formal parameter is rescaled by a factor $2$ as well as the formal $2$-form $\Omega$.

As $B^3_{\nabla}[\alpha_1](f,g)$ and $\Lambda^{i_1j_1} \Lambda^{i_2j_2} \nabla^2_{i_1i_2} f\nabla^2_{j_1j_2} g$ are symmetric in $f,g$ and the other terms of $C_3(f,g)$ are anti-symmetric in $f,g$, we have:
$$[f,g]_{\ast_{\nabla,\Omega}}= \nu \{f,g\}-\nu^2\alpha_1(X_f,X_g)+\nu^3C^-_3(f,g)+O(\nu^4),$$
where
$$C^-_3(f,g)=\frac{1}{24} S^3_{\nabla}(f,g)+ (\imath_{X_f}\alpha_1)_i \Lambda^{ik} (\imath_{X_g}\alpha_1)_k-\alpha_2(X_f,X_g). $$

The fact that $\rho_1$ and $\rho_2$ are the first terms of the trace density is summurised in the following equations: For $\rho_1$ we have
$$-\int\alpha_1(X_f,X_g)\frac{\omega^{m}}{m!} = \int_M \{f,g\}\frac{\alpha_1\wedge\omega^{m-1}}{(m-1)!}.$$
For $\rho_2$, first by the moment map property \cite{cagutt} of $\mu$ we have
$$\int S^3_{\nabla}(f,g)\frac{\omega^{m}}{m!}= \int \{f,g\}\mu(\nabla) \frac{\omega^{m}}{m!},$$
also, 
$$-\int\alpha_2(X_f,X_g)\frac{\omega^{m}}{m!}=\int_M \{f,g\}\frac{\alpha_2\wedge\omega^{m-1}}{(m-1)!},$$
and finally,
$$\frac{1}{2}\int_M \{f,g\}\frac{\alpha_1\wedge\alpha_1\wedge\omega^{m-2}}{(m-2)!}=\int  \alpha_1(X_f,X_g)\rho_1\frac{\omega^{m}}{m!}- \int(\imath_{X_f}\alpha_1)_i \Lambda^{ik} (\imath_{X_g}\alpha_1)_k\frac{\omega^{m}}{m!}.$$

\end{proof}

\begin{remark} \label{remark:rho1kahler}
On a K\"ahler manifold $(M,\omega,J)$, applying Proposition \ref{prop:densitynu2} to $\Omega=\Omega_k(\omega)$ yields 
$$\rho^{\nabla,\Omega_k(\omega)}=\frac{1}{(2\pi\nu)^m}\left(1-\frac{\nu\,k}{2} \,\mathrm{S}_{\omega} + O(\nu^2)\right)$$
\end{remark}

\noindent We can now compute the first terms of the invariants.

\begin{proposition}\label{prop:firstinvariantcomp}
The invariant
$$\Tr^{[\omega],[\Omega]}(X)=-\frac{1}{(2\pi)^m\nu^{m-2}24} \int \mu_X^0 \mu(\nabla) \frac{\omega^m}{m!}+ O(\nu^{m-3}),$$
for $\mu_X:=\mu_X^0+\nu\mu_X^1+\nu^2\mu_X^2+O(\nu^3)$, the quantum moment map normalised by \eqref{normalize_c}.
\end{proposition}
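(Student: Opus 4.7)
The strategy is to compute
\[
\Tr^{*_{\nabla,\Omega}}(\mu_X) = \int_M \mu_X\,\rho^{\nabla,\Omega}\,\tfrac{\omega^m}{m!}
\]
directly from the expansion of the trace density in Proposition \ref{prop:densitynu2} and to extract the leading-order coefficient. Plugging in $\rho^{\nabla,\Omega} = (2\pi\nu)^{-m}(1 + \nu\rho_1 + \nu^2\rho_2 + O(\nu^3))$ together with the expansion $\mu_X = \mu_X^0 + \nu\mu_X^1 + \nu^2\mu_X^2 + O(\nu^3)$ produces three potentially leading contributions, of orders $\nu^{-m}$, $\nu^{1-m}$, and $\nu^{2-m}$, namely the integrals of $\mu_X^0$, of $\mu_X^1 + \mu_X^0\rho_1$, and of $\mu_X^2 + \mu_X^1\rho_1 + \mu_X^0\rho_2$ against $\omega^m/m!$.

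The crucial observation is that the normalization condition \eqref{normalize_c}, $\int_M \mu_X(\omega-\Omega)^m = 0$, when expanded order by order in $\nu$ using $\Omega = \nu\alpha_1 + \nu^2\alpha_2 + O(\nu^3)$, yields precisely the identities needed to cancel the top two contributions. The order-$\nu^0$ identity $\int_M \mu_X^0\,\omega^m = 0$ kills the $\nu^{-m}$ term. Since Proposition \ref{prop:densitynu2} gives $\rho_1\,\omega^m = -m\,\alpha_1\wedge\omega^{m-1}$, the order-$\nu^1$ identity $\int_M \mu_X^1\,\omega^m = m\int_M \mu_X^0\,\alpha_1\wedge\omega^{m-1}$ then kills the $\nu^{1-m}$ term.

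The remaining work is at order $\nu^{2-m}$. Substituting
\[
\rho_2 = -\tfrac{1}{24}\mu(\nabla) - m\,\tfrac{\alpha_2\wedge\omega^{m-1}}{\omega^m} + \tfrac{m(m-1)}{2}\,\tfrac{\alpha_1\wedge\alpha_1\wedge\omega^{m-2}}{\omega^m}
\]
and using once more $\rho_1\,\omega^m = -m\,\alpha_1\wedge\omega^{m-1}$, the integral $\int_M(\mu_X^2 + \mu_X^1\rho_1 + \mu_X^0\rho_2)\,\omega^m/m!$ splits into $-\tfrac{1}{24}\int_M\mu_X^0\,\mu(\nabla)\,\omega^m/m!$ plus exactly the order-$\nu^2$ coefficient (divided by $m!$) of $\int_M\mu_X(\omega-\Omega)^m$, which vanishes by \eqref{normalize_c}. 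Multiplying by the prefactor $(2\pi)^{-m}\nu^{2-m}$ then gives the claimed formula. I expect no genuine obstacle: the proof is essentially bookkeeping, and the only insight required is the matching between the successive cancellations produced by expanding the normalization condition in $\nu$ and the successive powers of $\nu$ in the trace density expansion.
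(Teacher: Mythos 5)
Your proposal is correct and follows essentially the same route as the paper: expand $\Tr^{\ast_{\nabla,\Omega}}(\mu_X)=\int_M\mu_X\,\rho^{\nabla,\Omega}\,\omega^m/m!$ using Proposition \ref{prop:densitynu2} and the expansion of $\mu_X$, then observe that the order-by-order expansion of the normalization $\int_M\mu_X(\omega-\Omega)^m=0$ cancels everything except the Cahen--Gutt term. The paper carries out exactly this bookkeeping, invoking the normalization in one stroke at the end rather than order by order, but the cancellation mechanism is identical.
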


\begin{proof}
We compute the trace
\begin{eqnarray*}
(2\pi\nu)^m\Tr^{[\omega],[\Omega]}(X) & = & (2\pi\nu)^m\Tr^{*_{\nabla,\Omega}}(\mu_X) \\
 & = &\int \mu_X^0 \frac{\omega^{m}}{m!}+\nu\left( -\int \mu_X^0 \frac{\alpha_1\wedge\omega^{m-1}}{(m-1)!} + \int \mu_X^1 \frac{\omega^{m}}{m!}\right)\\
 & & +\nu^2\left(-\frac{1}{24}\int \mu^0_X \mu(\nabla)\frac{\omega^{m}}{m!}- \int \mu_X^0 \frac{\alpha_2\wedge\omega^{m-1}}{(m-1)!}+ \int \mu_X^0 \frac{1}{2}\dfrac{\alpha_1\wedge\alpha_1\wedge\omega^{m-2}}{(m-2)!}\right. \\
 & & \left.-\int\mu_X^1 \frac{\alpha_1\wedge\omega^{m-1}}{(m-1)!} +\int \mu_X^2 \frac{\omega^{m}}{m!}\right) + O(\nu^3)
\end{eqnarray*}
But the moment map is normalised so that $\int \mu_X (\omega-\Omega)^n=0$, hence the above becomes:
$$(2\pi\nu)^m\Tr^{[\omega],[\Omega]}(X)=-\frac{\nu^2}{24}\int \mu^0_X \mu(\nabla)\frac{\omega^{m}}{m!}+ O(\nu^3).$$
\end{proof}

\begin{proposition}
The invariant
$$\Tr^{\calM^G_{[\omega]},k}(X)=\frac{1}{(2\pi\nu)^m}\left(\frac{2\pi\nu}{m!}\mathfrak{F}_{kc_1}(X)+\frac{(2\pi\nu)^2}{(m-1)!}\mathfrak{F}_{\frac{1}{12}c_2-\frac{1+12k^2}{24}c_1^2}(X)+O(\nu^3)\right).$$
\end{proposition}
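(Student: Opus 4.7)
The plan is to expand $\Tr^{\ast_{\nabla,\Omega_k(\omega)}}(\tilde\mu^k_X)$ order by order in $\nu$ using the trace-density expansion of Proposition~\ref{prop:densitynu2} with $\alpha_1=k\Ric(\omega)$ and $\alpha_2=0$, and to identify each resulting integral with a Futaki-type invariant through Lemma~\ref{lemma:Futakic1}. A preliminary simplification is that since $\mu_X$ is the normalized quantum moment map for $(\omega,0)$, the condition $d\mu_X=\imath(X)\omega$ only constrains the $\nu^0$-part while forcing the higher-order coefficients to be constants; the normalization $\int_M\mu_X\,\omega^m=0$ then kills these constants, so $\mu_X=\mu_X^0$ is an ordinary real Hamiltonian and $\tilde\mu^k_X=\mu_X^0-\tfrac{\nu k}{2}\Delta^{(\omega)}\mu_X^0$. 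Proposition~\ref{prop:densitynu2} then gives $\rho_1=-mk(\Ric\wedge\omega^{m-1})/\omega^m$ and $\rho_2=-\tfrac{1}{24}\mu(\nabla)+\tfrac{m(m-1)k^2}{2}(\Ric^2\wedge\omega^{m-2})/\omega^m$.

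The $\nu^0$ coefficient of $(2\pi\nu)^m\Tr^{\ast_{\nabla,\Omega_k(\omega)}}(\tilde\mu^k_X)$ vanishes by normalization, and using $\int_M\Delta^{(\omega)}\mu_X^0\,\omega^m=0$ the $\nu^1$ coefficient reduces to $-\tfrac{k}{(m-1)!}\int_M\mu_X^0\,\Ric\wedge\omega^{m-1}$, which is exactly $\tfrac{2\pi}{m!}\mathfrak{F}_{kc_1}(X)$ by Lemma~\ref{lemma:Futakic1} at $p=1$ (the Laplacian term there integrates to zero). The $\nu^2$ coefficient splits into three contributions: (i) the Cahen--Gutt piece $-\tfrac{1}{24}\int_M\mu_X^0\,\mu(\nabla)\,\omega^m/m!$ from $\rho_2$; (ii) the $\alpha_1\wedge\alpha_1$ piece $\tfrac{k^2}{2(m-2)!}\int_M\mu_X^0\,\Ric^2\wedge\omega^{m-2}$ also from $\rho_2$; (iii) the cross term $\tfrac{k^2}{2(m-1)!}\int_M\Delta^{(\omega)}\mu_X^0\,\Ric\wedge\omega^{m-1}$ coming from $-\tfrac{\nu k}{2}\Delta^{(\omega)}\mu_X^0$ paired against $\rho_1$. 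A direct regrouping of (ii) and (iii) with Lemma~\ref{lemma:Futakic1} at $p=2$ identifies them with $-\tfrac{(2\pi)^2 k^2}{2(m-1)!}\mathfrak{F}_{c_1^2}(X)=\tfrac{(2\pi)^2}{(m-1)!}\mathfrak{F}_{-\frac{k^2}{2}c_1^2}(X)$, accounting precisely for the $12k^2$-part of the $\frac{1+12k^2}{24}c_1^2$ coefficient in the claim since $\tfrac{1+12k^2}{24}=\tfrac{1}{24}+\tfrac{k^2}{2}$.

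The main obstacle is the $k$-independent matching, namely to establish the identity
$$\int_M\mu_X^0\,\mu(\nabla)\,\frac{\omega^m}{m!}=\frac{(2\pi)^2}{(m-1)!}\bigl(\mathfrak{F}_{c_1^2}(X)-2\,\mathfrak{F}_{c_2}(X)\bigr),$$
so that contribution (i) contributes the missing $\tfrac{1}{12}\mathfrak{F}_{c_2}-\tfrac{1}{24}\mathfrak{F}_{c_1^2}$. This requires rewriting the Cahen--Gutt momentum $\mu(\nabla)=(\nabla^2\Ric)^{pq}_{pq}-\tfrac12\Ric_{pq}\Ric^{pq}+\tfrac14 R_{pqrs}R^{pqrs}$ on a K\"ahler manifold as a Chern--Weil combination of $c_2$ and $c_1^2$ modulo a divergence. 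Concretely, one uses the contracted second Bianchi identity to turn the $(\nabla^2\Ric)^{pq}_{pq}$ term into a multiple of $\Delta^{(\omega)}\mathrm{S}_\omega$ plus a divergence, together with the observation $d\imath(X)\Ric=-\tfrac12 d\Delta^{(\omega)}\mu_X^0$ from Proposition~\ref{prop:quantumkahler}(1); and one uses the standard K\"ahler identities converting $|\Ric|^2\omega^m$ and $|R|^2\omega^m$ into multiples of $\Ric^2\wedge\omega^{m-2}$ and $\tr(R\wedge R)\wedge\omega^{m-2}$, which are precisely the curvature representatives of $c_1^2\wedge\omega^{m-2}$ and $(c_1^2-2c_2)\wedge\omega^{m-2}$ up to normalization. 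Pairing the result against $\mu_X^0$ and integrating the divergence piece by parts produces exactly the $\Delta^{(\omega)}\mu_X^0$ term of Lemma~\ref{lemma:Futakic1} at $p=2$, and the remaining algebraic constants match the coefficients $\tfrac{1}{12}$ and $-\tfrac{1}{24}$ in the statement. Combining everything yields the displayed expansion up to $O(\nu^3)$.
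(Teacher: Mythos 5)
Your computation follows the paper's proof essentially line by line: the same expansion of $(2\pi\nu)^m\Tr^{*_{\nabla,\Omega_k(\omega)}}(\tilde\mu^k_X)$ against the trace density of Proposition \ref{prop:densitynu2} with $\alpha_1=k\Ric(\omega)$, $\alpha_2=0$, the same vanishing of the $\nu^0$ term by normalization, the same identification of the $\nu$-term with $\frac{2\pi}{m!}\mathfrak{F}_{kc_1}(X)$, and the same regrouping of the two $k^2$-contributions into $-\mathfrak{F}_{\frac{k^2}{2}c_1^2}(X)$ via Lemma \ref{lemma:Futakic1} at $p=2$. All of these coefficients check out, including the observation that $\mu_X$ reduces to its $\nu^0$-part.

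The one point where you diverge is the $k$-independent piece. You correctly isolate the exact identity that is needed, namely $\int_M\mu_X^0\,\mu(\nabla)\,\frac{\omega^m}{m!}=\frac{(2\pi)^2}{(m-1)!}\bigl(\mathfrak{F}_{c_1^2}(X)-2\mathfrak{F}_{c_2}(X)\bigr)$, which is equivalent to the paper's statement that the Cahen--Gutt term equals $\frac{(2\pi)^2}{(m-1)!}\mathfrak{F}_{\frac{1}{12}c_2-\frac{1}{24}c_1^2}(X)$. The paper does not prove this: it cites \cite{LLF2}, where this identification is the main computation. Your sketch of a direct derivation is plausible in outline but is not a proof as written: the assertion that ``the remaining algebraic constants match'' is precisely the content that must be verified, and the intermediate claim that $|\Ric|^2\,\omega^m$ and $|R|^2\,\omega^m$ convert into \emph{multiples} of $\Ric^2\wedge\omega^{m-2}$ and $\tr(R\wedge R)\wedge\omega^{m-2}$ is imprecise --- the wedge-contraction identities on a K\"ahler manifold produce additional scalar-curvature-squared terms (e.g.\ $\Ric^2\wedge\omega^{m-2}/\omega^m$ involves both $\mathrm{S}_\omega^2$ and $|\Ric|^2$), and these must be tracked and cancelled before the coefficients $\frac{1}{12}$ and $-\frac{1}{24}$ can be confirmed. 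So either carry out that Chern--Weil/Bianchi computation in full, or do what the paper does and invoke the result of \cite{LLF2}; as it stands this single step is a genuine gap in an otherwise correct and faithful reproduction of the argument.
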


\begin{proof}
We consider the moment map $\tilde{\mu}^k_X:=\mu_X-\frac{\nu k}{2}\Delta^{(\omega)} \mu_X$ given in Proposition \ref{prop:quantumkahler}, normalised by the integral. Using Proposition \ref{prop:densitynu2} with $\Omega=\Omega_k(\omega)$, the trace is 
\begin{eqnarray*}
(2\pi\nu)^m\Tr^{\calM^G_{[\omega]},k}(X) & = & - k \nu  \int \mu_X \frac{\Ric(\omega)\wedge\omega^{m-1}}{(m-1)!} + \nu^2 \left(-\frac{1}{24}\int \mu_X\mu(\nabla) \frac{\omega^{m}}{m!} \right.\\
 & &\left.+ \frac{k^2}{2}\int \mu_X \dfrac{\Ric(\omega)\wedge\Ric(\omega)\wedge\omega^{m-2}}{(m-2)!}+\Delta^{(\omega)} \mu_X \frac{\Ric(\omega)\wedge\omega^{m-1}}{(m-1)!} \right)+O(\nu^3)
\end{eqnarray*}
The term in $\nu$ is visibly $\frac{2\pi}{m!}\mathfrak{F}_{kc_1}(X)$. From Lemma \ref{lemma:Futakic1}, one sees that 
$$ -\mathfrak{F}_{\frac{k^2}{2}c_1^2}(X)= \frac{k^2}{2}\left(\int \mu_X \dfrac{\Ric(\omega)\wedge\Ric(\omega)\wedge\omega^{m-2}}{(m-2)!}+\Delta^{(\omega)} \mu_X \frac{\Ric(\omega)\wedge\omega^{m-1}}{(m-1)!} \right).$$
Finally, the remaining term in $\nu^2$ involving the Cahen-Gutt moment map was identified in \cite{LLF2} to be $\frac{(2\pi)^2}{(m-1)!}\mathfrak{F}_{\frac{1}{12}c_2-\frac{1}{24}c_1^2}(X)$.
\end{proof}

\end{document}